\newtheorem{proposition}{Proposition}
\newtheorem{theorem}{Theorem}
\newtheorem{lemma}{Lemma}
\newtheorem{corollary}{Corollary}
\theoremstyle{definition}
\newtheorem{definition}{Definition}
\newtheorem{remark}{Remark}
\theoremstyle{definition}
\renewenvironment{proof}{\smallskip\noindent\emph{\textbf{Proof.}}%
  \hspace{1pt}}{\hspace{-5pt}{\nobreak\quad\nobreak\hfill\nobreak%
    $\square$\vspace{2pt}\par}\smallskip\goodbreak}
\newcommand{\C}[1]{\mathbf{C}^{#1}}
\renewcommand{\L}[1]{{\mathbf{L}^#1}}
\newcommand{\modulo}[1]{{\left|#1\right|}}
\newcommand{\reali}{{\mathbb{R}}}
\renewcommand{\epsilon}{\varepsilon}
\renewcommand{\phi}{\varphi}
\renewcommand{\theta}{\vartheta}
\newcommand{\Lip}{\mathinner\mathbf{Lip}}
\newcommand{\diam}{\mathop{\rm diam}}
\renewcommand{\d}[1]{\mathinner{\mathrm{d}{#1}}}
\renewcommand{\div}{\mathop{\rm div}}
\newcommand{\co}{\mathop{\rm co}}
\title{Optimal Control of Continuity Equations}
\date{October 18, 2014}
\begin{document}

\author{Nikolay~Pogodaev\footnotemark[1]}

\footnotetext[1]{Institute for System Dynamics and Control Theory of
Siberian Branch of Russian Academy of Sciences.}

\maketitle

\begin{abstract}
\noindent An optimal control problem for the continuity equation is considered. The aim of a ``controller'' is to maximize the total mass within a target set at a given time moment. The existence of optimal controls is established. For a particular case of the problem, where an initial distribution is absolutely continuous with smooth density and the target set has certain regularity properties, a necessary optimality condition is derived. It is shown that for the general problem one may construct a perturbed problem that satisfies all the assumptions of the necessary optimality condition, and any optimal control for the perturbed problem, is nearly optimal for the original one. 

  \medskip

  \noindent\textit{2000~Mathematics Subject Classification:} 49K20, 49J15

  \medskip

  \noindent\textit{Keywords:} Continuity equation, Liouville equation, Optimal control, Beam control, 
  Flock control, Necessary optimality condition, Variational stability
\end{abstract}

\maketitle

\section{Introduction}
\label{sec:introduction}
Consider a quantity distributed on $\reali^n$. Suppose that the distribution evolves along 
a controlled vector field $v = v(t,x,u)$ according to the law of mass conservation;
the control parameter $u$ can be chosen at every time $t$ from a compact set $U\subset \reali^m$. 
Given a time moment $T>0$ and a target set $A\subset\reali^n$, 
we aim at finding a control $u=u(t)$ that maximizes
the total mass within $A$ at the time moment $T$. 
More formally the problem can be written as follows
\begin{gather*}
\mbox{Maximize}\quad \int_{A}\rho(T,x)\d x\\
\mbox{subject to}\quad
\begin{cases}
\rho_t + \div{}\hspace{-2pt}_x \left(v\left(t,x,u(t)\right)\rho\right)=0,\\
\rho(0,x) = \rho_0(x),
\end{cases}
\quad u\in \mathcal{U},
\tag{$P_{\text{ac}}$}
\end{gather*}
where $\rho_0=\rho_0(x)$ is the density function of an initial distribution and
\begin{equation}
\label{eq:admcont}
\mathcal{U} = \left\{u(\cdot)\;\;\mbox{is measurable,}\;\; u(t)\in U\;\;\mbox{for all}\;\; t\in [0,T]\right\}
\end{equation}
is the set of admissible controls.

First, let us give several motivating examples.
\vspace{3pt}
\paragraph{Dynamical system with uncertain initial state.}

Let $x_0$ be an initial state of the following dynamical system
\begin{equation}
\label{eq:ode}
\dot x = v(t,x,u).
\end{equation}
Assume that one wants to find a control $u=u(t)$ that brings the state of the system to a target set $A$
at a given time $T$. 
Now let the precise initial state $x_0$ be unknown. Instead, assume that a \emph{probability distribution} of
$x_0$ on the state space is given.
In this case, one naturally looks for a control that maximizes the \emph{probability} of 
finding the state of the system within the target set at time $T$. This leads to problem $(P_{\text{ac}})$.

\vspace{3pt}
\paragraph{Flock control.}
Let $\rho_0$ characterize an initial distribution of sheep in a given area. 
Assume that the herd drifts along a vector field $v_s(x)$.
Assume, in addition, that there is a dog located at $u$. In this case,
a sheep located at $x$ obtains an additional velocity
\[
v_d(x,u) = \phi\left(|x-u|\right)(x-u).
\]
When $\phi$ is positive this means that the sheep tries to escape the dog.
If the interaction between the sheep are not relevant, the motion of the whole herd is described by
the equation
\[
\rho_t + \div{}\hspace{-2pt}_x\left(\left[v_s(x)+v_d(x,u)\right]\rho\right)=0.
\]
Typically, the dog wants to steer the herd to a target set $A$ at a given time. In this case an optimal 
strategy of the dog is determined by $(P_{\text{ac}})$.

\vspace{3pt}
\paragraph{Beam control.}

The motion of a charged particle in an electromagnetic field is described by the system
\begin{equation}
\label{eq:EMF}
\begin{cases}
\frac{d\mathbf{x}}{dt}  = \mathbf{v},\\
\frac{d(m\mathbf{v})}{dt}  = e\left(\mathbf{E}+\mathbf{v}\times\mathbf{B}\right) + \mathbf{G}(t,\mathbf{x},\mathbf{v}).
\end{cases}
\end{equation}
The particle is characterised by its charge $e$, rest mass $m_0$, and the relativistic mass $m = m_0/\sqrt{1-|\mathbf{v}|^2/c^2}$.
Above, $\mathbf{x} = (x_1,x_2,x_3)$ are the particle's coordinates, $\mathbf{v} = (v_1,v_2,v_3)$ are the velocities,
$\mathbf{E}$ and $\mathbf{B}$ are the electric and magnetic fields,
$c$ is the speed of light, and $\mathbf{G}(t,\mathbf{x},\mathbf{v})$ represents additional forces due to 
the interaction between the particle and the environment. 

Assume that the electromagnetic field depends on a parameter $u$, which can be chosen at every time moment $t$, i.e.,
\[
\mathbf{E} = \mathbf{E}(t,\mathbf{x},u),\qquad \mathbf{B} = \mathbf{B}(t,\mathbf{x},u).
\]
Then~\eqref{eq:EMF} can be rewritten in the form~\eqref{eq:ode} with 
$x = (x_1,x_2,x_3,v_1,v_2,v_3)$.

Producing a single particle (for example, in a particle accelerator) is extremely difficult. Instead,
a \emph{beam of particles} is produced. At the initial time moment every beam is characterized 
by its density function $\rho_0$ defined on the state space of~\eqref{eq:EMF}. 
Usually, one wants to focus the beams to ensure that the particles traveling in the accelerator collide.
Clearly, one can formulate this problem as $(P_{\text{ac}})$ with the target set
\[
A = \left\{(\mathbf{x},\mathbf{v})\;\colon\; |\mathbf{x}|\leq r,\;\; |\mathbf v|\leq c\right\},
\]
where $r$ is a desired radius of the beam.

The connection between dynamical systems with uncertain initial states and 
continuity equations is well-known. In the context of control theory it was mentioned 
in~\cite{BrockettLiouville2,Mazurenko,Propoi94}.
A basic mathematical model for flocks controlled by a leader is presented in~\cite{CGMP,ColomboMercierJNLS}. 
In contrast to these papers we neglect the interactions between flock's members. Formally, a nonlocal term describing 
the internal dynamics of the flock is omitted.
For models of the beam control we refer to~\cite{Ovsyannikov90}.

Existence results for optimal control problems governed by continuity equations seem to be missing in the current 
literature (at least for nonlinear vector fields $v$).
Necessary optimality conditions were
derived by D.\,A.~Ovsyannikov in~\cite{Ovsyannikov2006,Ovsyannikov80,Ovsyannikov90} and by A.\,I. Propo{\u\i} with his collaborators in~\cite{Propoi88,Propoi94,Propoi90}.
We remark that all the papers mentioned above consider the terminal cost functional
\[
\int_{\reali^n} \phi(x)\rho(T,x)\d x
\]
with \emph{smooth} $\phi$ and/or an integral cost functional. Moreover, the initial density $\rho_0$ is always assumed to be smooth.
Finally, let us mention the paper~\cite{Mazurenko} of S.\,S. Mazurenko, where
the dynamical programming method was developed,
and the papers~\cite{BrockettLiouville1,BrockettLiouville2} of R. Brockett, who
discussed controllability and some connections with stochastic equations.

In this paper we first study the existence of optimal controls. Next, assuming that the initial density $\rho_0$ is smooth
and the target set $A$ is sufficiently regular, we derive a necessary optimality condition.
Finally, we discuss the case of integrable $\rho_0$ and arbitrary $A$. More precisely, we replace $(P_{\text{ac}})$ 
by a perturbed problem $(P_{\epsilon})$ which satisfies all the assumptions of our necessary optimality condition. Then we show that 
every control that is optimal for $(P_{\epsilon})$ is ``nearly optimal'' for $(P_{\text{ac}})$.

We believe that our choice of the cost functional is not relevant
in the sense that the methods developed here should work in other cases as well. At the same time, in many situations 
it seems natural to maximize the total mass within a target set at a given time moment.
Moreover, our choice of the cost functional allows to derive a rather simple necessary optimality condition (see Section~\ref{sec:PMP}).

As we shall see later, dealing with the continuity equation for measures is more natural than dealing with that of for functions. 
For this reason, the main subject of our study is the following optimal control problem
\begin{gather*}
\mbox{Maximize}\quad \mu(T)(A)\\
\mbox{subject to}\quad
\begin{cases}
\mu_t + \div{}\hspace{-2pt}_x \left(v\left(t,x,u(t)\right)\mu\right)=0,\\
\mu(0) = \theta,
\end{cases}
\quad u\in \mathcal{U}.
\tag{$P$}
\end{gather*}
Now, $(P_{\text{ac}})$ is just a particular case of $(P)$, where the initial probability measure $\theta$ is absolutely continuous with
density $\rho_0$.

The paper is organized as follows. In Section~\ref{sec:prlim} we introduce basic notations, discuss the continuity equation
and generalized controls.  
In the next section we study the existence of optimal controls. 
Then, in Section~\ref{sec:PMP}, we prove a necessary optimality condition for a special case of $(P)$, where the initial distribution
is absolutely continuous with smooth density and the target set has certain regularity properties. 
In the last section we show that the general problem $(P)$ can be replaced by a perturbed problem $(P_{\epsilon})$ such that
$(P_{\epsilon})$ satisfies all the assumptions of our necessary optimality condition and every optimal control
for $(P_{\epsilon})$ is ``nearly optimal'' for $(P)$. For the reader's convenience, we place in Appendix a brief introduction to Young measures.

\section{Preliminaries}
\label{sec:prlim}

\subsection{Notation}

In what follows, $|x|$ is the Euclidean norm of $x\in \reali^n$ and $x\cdot y$ is the scalar product of 
$x,y\in\reali^n$. By $A_r$ and $A_r^\circ$ we mean the closed and the open 
$r$-neighbourhoods of $A\subset\reali^n$, i.e.,
\begin{align*}
A_r = \{x\;\colon\;|x-y|\leq r\mbox{  for some  } y\in A\},\\
A_r^\circ= \{x\;\colon\;|x-y|< r \mbox{  for some  } y\in A\}.
\end{align*}

Let $\mathcal{P}(\reali^n)$ denote the set of all probability measures on $\reali^n$.
We equip $\mathcal{P}(\reali^n)$ with the Prohorov distance:
\begin{multline*}
d_P(\theta_1,\theta_2) = 
\inf
\big\{\epsilon>0\;\colon\; 
\theta_1(A)\leq \theta_2(A_{\epsilon}^\circ)+\epsilon,\\
\theta_2(A)\leq \theta_1(A_{\epsilon}^\circ)+\epsilon\;\;
\mbox{for all Borel sets}\; A
\big\}.
\end{multline*}
The convergence
in the resulting metric space 
is exactly the narrow convergence of measures (see~\cite{BillingsleyConv,BogachevBook}).

Given a Borel map $f\colon\reali^n\to\reali^n$ and a probability measure $\theta$, define the \emph{pushforward 
(or image) measure} $\theta\circ f^{-1}$ by
\[
\left(\theta\circ f^{-1}\right)(A) = \theta\left(f^{-1}(A)\right)\quad\mbox{for all Borel sets }\;A\subseteq\reali^n.
\]
Recall the change of variables formula 
\[
\int \phi \d{\left(\theta\circ f^{-1}\right)} = \int \phi\circ f \d{\theta},
\]
which holds for all bounded Borel functions $\phi\colon\reali^n\to\reali$.

Below, $\lambda$ is the $n$-dimensional Lebesgue measure, $\sigma$ is the $(n-1)$-dimensional Hausdorff measure, $\rho\lambda$
is a probability measure which is absolutely continuous with respect to $\lambda$ and whose density is $\rho$.

\subsection{Flows of vector fields}

Consider a vector field $v\colon [0,T]\times\reali^n\to\reali^n$ satisfying the assumption
\[
\mbox{\textbf{(A0)}}\;\;
\begin{cases}
\mbox{The map $v=v(t,x)$ is measurable with respect to $t$; there are}\\ 
\mbox{positive constants $L$, $C$ such that, for all $t\in [0,T]$ and $x,x'\in\reali^n$,}\\
\modulo{v(t,x)-v(t,x')}\leq L|x-x'|
\quad\mbox{ and }\quad
\modulo{v(t,x)}\leq C\left(1+|x|\right).
\end{cases}
\]
Then there exists a unique solution $s\mapsto V_t^s(x)$ to the Cauchy problem
\[
\begin{cases}
\dot y(s) = v\left(s,y(s)\right),\\
y(t) = x.
\end{cases}
\]
The map $(s,t,x)\mapsto V_t^s(x)$ is called \emph{the flow of the vector field} $v$. 
Recall that the function $x\mapsto V_t^s(x)$ is a diffeomorphism and
\begin{gather*}
V_{\tau}^s\circ V_t^{\tau} = V_t^{s},\qquad V_s^s = \mathrm{Id},\\
\Lip(V_t^s)\leq e^{L|s-t|},\qquad \modulo{V_t^s(x)}\leq e^{C|s-t|}\left(1+|x|\right),
\end{gather*}
for any $s,t,\tau\in [0,T]$.
For details see, e.g.,~\cite{Betounes}.

\subsection{Continuity equation}

Let $\theta$ be a probability measure on $\reali^n$ and $v$ be a vector field satisfying Assumption \textbf{(A0)}. Consider the Cauchy problem
\begin{equation}
\label{eq:conteq}
\begin{cases}
\mu_t + \div{}\hspace{-2pt}_x \left(v\left(t,x\right)\mu\right)=0,\\
\mu(t_0)=\theta.
\end{cases}
\end{equation}

\begin{definition}
We say that a map $\mu\in \C0\left([t_0,T]; \mathcal{P}(\reali^n)\right)$ is a \emph{distributional solution} to~\eqref{eq:conteq}
if for any bounded Lipschitz continuous test function $\phi\colon [t_0,T]\times\reali^n\to \reali$, we have
\[
\int_{t_0}^T\int_{\reali^n}\left(\phi_t + v\cdot\nabla_{x}\phi \right)\d{\mu(t)}\d t = 0
\]
and $\mu(t_0)=\theta$.

\end{definition}

As is known~\cite{AmbrosioCrippa}, under Assumption \textbf{(A0)} there exists a unique distributional solution to~\eqref{eq:conteq}.
Moreover, this solution can be written in the form
\begin{equation}
\label{eq:representation}
\mu(t) = \theta\circ V_{t}^{t_0}\quad\mbox{for all }\; t\in [t_0,T],
\end{equation}
so that $\mu(t)$ is the pushforward of $\theta$ by $V_{t_0}^t$.

If $\theta$ is absolutely continuous, i.e., $\theta = \rho_0\lambda$, then~\eqref{eq:representation}
implies that $\mu(t)$ is absolutely continuous for each $t$. More precisely, we have $\mu(t)=\rho(t,\cdot)\lambda$,
where
\[
\rho(t,x) = \frac{\rho_0\circ V_{t}^{t_0}(x)}{\det DV_{t_0}^t(x)}.
\]

\subsection{Generalized controls}

Recall that by (usual) controls we mean the maps belonging to $\mathcal{U}$. 
We shall see later that in some cases it is more convenient to deal with \emph{generalized controls}.
We define such controls via Young measures. A brief discussion of Young measures
is given in Appendix; for a more solid introduction to the subject see, for instance,~\cite{ValadierCourse}.

\begin{definition}
A \emph{generalized control} is a Young measure $\nu\in \mathcal{Y}\left([0,T];\reali^m\right)$
whose support is contained in $[0,T]\times U$. 
The set of all generalized controls is denoted by $\mathcal{Y}\left([0,T];U\right)$.
\end{definition}

It is evident that a Young measure $\nu$ \emph{associated with} a usual control $u\in \mathcal{U}$
belongs to $\mathcal{Y}\left([0,T];U\right)$.

\begin{proposition}
\label{prop:compact}
The set of generalized controls $\mathcal{Y}\left([0,T];U\right)$ is compact 
in the space of Young measures $\mathcal{Y}\left([0,T];\reali^m\right)$.
\end{proposition}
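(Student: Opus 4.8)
The plan is to exhibit $\mathcal{Y}\left([0,T];U\right)$ as a closed subset of the compact space $\mathcal{Y}\left([0,T];\reali^m\right)$, since a closed subset of a compact space is compact. This relies on two ingredients: first, the compactness of the ambient space of Young measures (which is the standard Prohorov-type compactness theorem for Young measures, available because $[0,T]$ carries a finite measure; I would cite this from the Appendix or from~\cite{ValadierCourse}); second, the fact that the support constraint ``$\spt\nu\subseteq[0,T]\times U$'' is preserved under the narrow convergence that topologizes $\mathcal{Y}\left([0,T];\reali^m\right)$.

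The first step is to recall precisely the topology on $\mathcal{Y}\left([0,T];\reali^m\right)$: a net $\nu_k$ converges to $\nu$ iff $\int_0^T\int_{\reali^m}\psi(t,x)\,\d\nu_k \to \int_0^T\int_{\reali^m}\psi(t,x)\,\d\nu$ for every Carathéodory integrand $\psi$ that is bounded (or more precisely in the appropriate test class $\mathbf{C}_b$ in $x$, integrable in $t$). Since $U$ is compact, the ambient Young measure space — viewed as measures on $[0,T]\times\reali^m$ with fixed time-marginal equal to Lebesgue measure on $[0,T]$ — is tight, and hence relatively compact; it is also closed (the time-marginal condition passes to limits by testing against functions of $t$ alone). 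This gives compactness of $\mathcal{Y}\left([0,T];\reali^m\right)$, which I may simply invoke as the standard fundamental theorem for Young measures.

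The second step, which is the crux, is to show $\mathcal{Y}\left([0,T];U\right)$ is closed. Suppose $\nu_k\to\nu$ with each $\nu_k$ supported in $[0,T]\times U$. Because $U$ is closed, $\reali^m\setminus U$ is open, so there is a nondecreasing sequence of continuous functions $0\le\phi_j\le 1$ with $\phi_j=0$ on $U$ and $\phi_j\uparrow\caratt{\reali^m\setminus U}$ pointwise. Each integrand $(t,x)\mapsto\phi_j(x)$ is a bounded Carathéodory function, so $\int_0^T\!\int\phi_j\,\d\nu = \lim_k \int_0^T\!\int\phi_j\,\d\nu_k = 0$, the last equality because $\nu_k$ charges only $[0,T]\times U$ where $\phi_j$ vanishes. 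Letting $j\to\infty$ and using monotone convergence, $\nu\bigl([0,T]\times(\reali^m\setminus U)\bigr)=0$, i.e. $\nu$ is concentrated on $[0,T]\times U$; since the disintegration measures $\nu_t$ are then concentrated on the closed set $U$ for a.e.\ $t$, the support of $\nu$ lies in $[0,T]\times U$. Hence $\nu\in\mathcal{Y}\left([0,T];U\right)$, proving closedness, and compactness follows.

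The main obstacle is purely bookkeeping: matching the precise test-function class used to define convergence of Young measures in the Appendix with the class needed to run the cutoff argument (boundedness of $\phi_j$ is essential, and one must make sure functions depending only on $x$ are admissible integrands after multiplying by a finite time factor). Once the correct formulation of the fundamental theorem for Young measures on a finite measure space is in hand, there is no analytic difficulty — the argument is the standard ``weak limits preserve support constraints imposed by closed target sets'' reasoning.
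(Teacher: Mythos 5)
Your closedness argument is correct, but the compactness step contains a genuine error: the ambient space $\mathcal{Y}\left([0,T];\reali^m\right)$ is \emph{not} compact in the narrow topology, so the scheme ``closed subset of a compact space'' does not literally apply. To see the failure, take the Young measures $\nu_j$ associated with the constant functions $u_j\equiv j e_1$ on $[0,T]$ and test against the bounded continuous integrand $(t,x)\mapsto \arctan(x_1)$: the integrals converge to $\tfrac{\pi}{2}T$, a value that no Young measure can attain, so no subsequence of $(\nu_j)$ converges narrowly to a Young measure. Compactness for Young measures requires tightness (this is exactly the hypothesis of the Prohorov theorem stated in the Appendix), and the full space $\mathcal{Y}\left([0,T];\reali^m\right)$ is not tight. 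Your parenthetical ``since $U$ is compact'' shows you have the right reason in mind, but it must be applied to the constrained set, not to the ambient one: the set $\mathcal{Y}\left([0,T];U\right)$ is tight (take $K=U$ in the definition of tightness, so that $\nu\left([0,T]\times K^c\right)=0$ for every $\nu$ in the set), hence \emph{relatively} narrowly compact by Prohorov; combined with your closedness argument this yields compactness. This is precisely the route the paper takes, and the fix to your write-up is a one-line relocation of the tightness claim.

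As for closedness, your cutoff construction ($0\leq\phi_j\leq 1$ continuous, vanishing on $U$ and increasing to $\caratt{U^c}$, followed by monotone convergence) is a correct, self-contained proof of the inequality $\liminf_j\nu_j(G)\geq\nu(G)$ for the open set $G=\,]0,T[\,\times U^c$; the paper simply invokes this portmanteau-type property of narrow convergence directly. So that half is sound and essentially identical in substance to the paper's argument, merely more explicit.
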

\begin{proof}
The set $\mathcal{Y}\left([0,T];U\right)$ is relatively narrowly compact by Theorem~\ref{thm:prokhorov}. It remains to 
verify that it is closed. To this end, consider a converging sequence $(\nu_j)\subset \mathcal{Y}\left([0,T];U\right)$
with the narrow limit $\nu$. 
We only have to show that the support of $\nu$ is contained in $[0,T]\times U$. 
For each $j$, we have $\nu_j\left(\,]0,T[\times U^c\,\right)=0$, where $U^c$ is the complement of $U$. 
Since the set $]0,T[\times U^c$ is open, it follows from basic properties of the narrow convergence that
\[
0 = \liminf_{j\to\infty}\nu_{j}\left(\,]0,T[\times U^c\,\right) \geq \nu\left(\,]0,T[ \times U^c\,\right)=\nu\left([0,T]\times U^c\right).
\]
\end{proof}

\begin{definition}
We say that $t\mapsto \mu(t)$ is a \emph{trajectory corresponding to a generalized control} $\nu$ if $t\mapsto \mu(t)$ is a distributional solution to the Cauchy problem
\begin{equation}
\label{eq:cauchybar}
\begin{cases}
\mu_t + \div{}\hspace{-2pt}_x \left(\bar v\left(t,x\right)\mu\right)=0,\\
\mu(0) = \theta,
\end{cases}
\end{equation}
where
\begin{equation}
\label{eq:average_vf}
\bar v(t,x) = \int_{U} v(t,x,\omega)\d \nu_t(w)
\end{equation}
with $(\nu_t)_{t\in [0,T]}$ being the \emph{disintegration} of $\nu$.
\end{definition}

\subsection{Limit lemmas}

Here we gather several useful lemmas about sequences of pushforward measures and their limits.

\begin{lemma}
\label{lem:mu_j}
Suppose that $\theta_j\in \mathcal{P}(\reali^n)$ for $j=0,1,2\ldots$\,, and 
$f\colon\reali^n\to\reali^n$ is a continuous function.
If $\theta_j\to\theta_0$ narrowly, then $\theta_j\circ f^{-1}\to \theta_0\circ f^{-1}$ narrowly.
\end{lemma}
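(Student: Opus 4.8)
The statement to prove is that narrow convergence $\theta_j \to \theta_0$ in $\mathcal{P}(\reali^n)$ is preserved under pushforward by a fixed continuous map $f$. The plan is to use the standard characterization of narrow convergence through test functions: $\theta_j \to \theta_0$ narrowly if and only if $\int \phi \d{\theta_j} \to \int \phi \d{\theta_0}$ for every $\phi \in \Cb{}(\reali^n)$ (bounded continuous real functions). This is exactly the topology the Prohorov metric metrizes, as recalled in the Preliminaries.

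The argument is then essentially one line combined with the change of variables formula. First I would fix an arbitrary $\phi \in \Cb{}(\reali^n)$. Since $f$ is continuous and $\phi$ is bounded and continuous, the composition $\phi \circ f$ is again bounded and continuous on $\reali^n$ (with the same sup-bound as $\phi$). Applying the change of variables formula stated in the Notation subsection,
\[
\int \phi \d{\left(\theta_j \circ f^{-1}\right)} = \int (\phi\circ f) \d{\theta_j}
\quad\text{for every } j = 0,1,2,\ldots
\]
Now invoking the hypothesis $\theta_j \to \theta_0$ narrowly with the particular bounded continuous test function $\phi\circ f$, the right-hand side converges to $\int (\phi\circ f)\d{\theta_0} = \int \phi \d{\left(\theta_0\circ f^{-1}\right)}$. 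Hence $\int \phi \d{\left(\theta_j\circ f^{-1}\right)} \to \int \phi \d{\left(\theta_0\circ f^{-1}\right)}$ for every $\phi \in \Cb{}(\reali^n)$, which is precisely narrow convergence $\theta_j\circ f^{-1} \to \theta_0\circ f^{-1}$.

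There is no real obstacle here; the only point requiring a word of care is that the change of variables formula as quoted is stated for bounded Borel $\phi$, so one should note that $\phi\circ f$ being bounded and continuous (hence Borel) puts it in the admissible class, and that the pushforward measures $\theta_j\circ f^{-1}$ are genuine probability measures on $\reali^n$ (immediate, since $(\theta_j\circ f^{-1})(\reali^n) = \theta_j(f^{-1}(\reali^n)) = \theta_j(\reali^n) = 1$), so that the limit object lives in the right space. If one preferred to avoid the test-function characterization, an alternative is a direct Prohorov-metric estimate using that $f^{-1}(A_\epsilon^\circ) \supseteq (f^{-1}(A))$ is false in general, so that route is messier — the test-function approach is cleaner and is the one I would write down.
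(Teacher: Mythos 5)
Your proof is correct and follows exactly the same route as the paper's: apply the change of variables formula to reduce the claim to testing against $\phi\circ f$, and observe that $\phi\circ f$ is again bounded and continuous so the hypothesis of narrow convergence applies. Nothing is missing; the extra remarks about measurability and total mass are fine but not needed.
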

\begin{proof}
By the change of variables formula, we have
\[
\int\phi\d{(\theta_j\circ f^{-1})} = \int \phi\circ f\d\theta_j,
\qquad j=0,1,2,\ldots,
\]
for each bounded continuous test function $\phi$.
It remains to notice that $\phi\circ f$ is also bounded and continuous.
\end{proof}

\begin{lemma}
\label{lem:f_j}
Let $f_j\colon\reali^n\to\reali^n$, $j=0,1,2,\ldots$\,, be continuous functions, and
$\theta\in \mathcal{P}(\reali^n)$.
If $f_j\to f_0$ pointwise, then $\theta\circ f_j^{-1}\to \theta\circ f_0^{-1}$ narrowly.
\end{lemma}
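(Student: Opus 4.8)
The plan is to reduce the narrow convergence of $\theta\circ f_j^{-1}$ to $\theta\circ f_0^{-1}$ to the convergence of integrals $\int \phi\d{(\theta\circ f_j^{-1})}\to\int\phi\d{(\theta\circ f_0^{-1})}$ for every bounded continuous $\phi$, and then rewrite these via the change of variables formula. First I would fix a bounded continuous test function $\phi\colon\reali^n\to\reali$ and apply the change of variables formula to get
\[
\int \phi\d{(\theta\circ f_j^{-1})} = \int \phi\circ f_j\d\theta,\qquad j=0,1,2,\ldots
\]
Since $f_j\to f_0$ pointwise and $\phi$ is continuous, the integrand $\phi\circ f_j$ converges pointwise to $\phi\circ f_0$ on $\reali^n$; since $\phi$ is bounded, say $|\phi|\le M$, we have $|\phi\circ f_j|\le M$ uniformly in $j$, and the constant $M$ is $\theta$-integrable because $\theta$ is a probability measure.

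The key step is then the dominated convergence theorem: it gives $\int \phi\circ f_j\d\theta\to\int\phi\circ f_0\d\theta$, and applying the change of variables formula once more in the reverse direction yields $\int \phi\d{(\theta\circ f_j^{-1})}\to\int\phi\d{(\theta\circ f_0^{-1})}$. Since $\phi$ was an arbitrary bounded continuous function, this is exactly narrow convergence $\theta\circ f_j^{-1}\to\theta\circ f_0^{-1}$, and the metric on $\mathcal{P}(\reali^n)$ induced by $d_P$ metrizes precisely this convergence, as recorded in Section~\ref{sec:prlim}.

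There is essentially no obstacle here: the only point worth a moment's care is that $\phi\circ f_j$ is Borel measurable (it is, being a composition of a continuous and a Borel map) so that the change of variables formula and the dominated convergence theorem legitimately apply; and that the dominating function is integrable, which is immediate from $\theta(\reali^n)=1$. Everything else is a direct combination of the change of variables formula, pointwise convergence, and dominated convergence. One could alternatively invoke Lemma~\ref{lem:mu_j} is not available since the $\theta_j$ are fixed, so the argument above via dominated convergence is the natural route.
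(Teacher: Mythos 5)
Your proof is correct and follows exactly the paper's own argument: rewrite the integrals via the change of variables formula, observe that $\phi\circ f_j\to\phi\circ f_0$ pointwise, and conclude by dominated convergence. No comparison is needed beyond noting that the two proofs coincide.
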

\begin{proof}
For every bounded continuous test function $\phi$, we can write
\[
\int\phi\d{(\theta\circ f_j^{-1})} = \int \phi\circ f_j\d\theta,
\qquad j=0,1,2,\ldots
\]
Clearly, $\phi\circ f_j\to \phi\circ f_0$ pointwise. 
Now to complete the proof, we apply the Lebesgue dominated convergence theorem
to the right-hand side of the latter identity.
\end{proof}

\begin{lemma}
\label{lem:weakf}
Let
$v_{j}\colon [0,T]\times\reali^n\to\reali^n$, $j=0,1,2,\ldots$\,, be vector fields satisfying \textnormal{\textbf{(A0)}}
with common constants $L$ and $C$, and let
$x_j=x_j(t)$ denote a solution to the Cauchy problem
\[
\begin{cases}
\dot y(t) = v_j\left(t,y(t)\right),\\
y(0)=a,
\end{cases}
\quad j=0,1,2,\ldots\,.
\]
If $v_j(\cdot,x)\to v_0(\cdot,x)$ weakly in $\L1\left([0,T];\reali^n\right)$ for all $x$, then 
$x_j(t)\to x_0(t)$ for all $t\in [0,T]$.
\end{lemma}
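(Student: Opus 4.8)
The plan is to show that the solutions $x_j$ stay in a fixed compact set, then use an integral (Volterra) formulation of the ODEs together with the weak convergence to pass to the limit, and finally close the argument by uniqueness of the limit ODE.

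First I would establish uniform a priori bounds. Since each $v_j$ satisfies \textbf{(A0)} with the same constants $L$ and $C$, the sublinear growth bound $|v_j(t,x)|\le C(1+|x|)$ and Gr\"onwall's inequality give $|x_j(t)|\le (1+|a|)e^{CT}-1=:R$ for all $t\in[0,T]$ and all $j$. Hence all trajectories lie in the closed ball $\overline{B}_R$. On this ball the Lipschitz bound yields $|v_j(t,x)|\le C(1+R)=:M$ for all $j$, so the family $(x_j)$ is uniformly bounded and, because $|\dot x_j(t)|\le M$, uniformly Lipschitz in $t$; by Arzel\`a--Ascoli every subsequence has a further subsequence converging uniformly on $[0,T]$ to some Lipschitz limit $\bar x$.

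Next I would identify the limit. Writing each ODE in integral form, $x_j(t)=a+\int_0^t v_j(s,x_j(s))\,\d s$, I split the integrand as
\[
v_j(s,x_j(s)) = \bigl(v_j(s,x_j(s))-v_j(s,\bar x(s))\bigr) + \bigl(v_j(s,\bar x(s))-v_0(s,\bar x(s))\bigr) + v_0(s,\bar x(s)).
\]
The first bracket is bounded in norm by $L|x_j(s)-\bar x(s)|$, which tends to $0$ uniformly along the chosen subsequence, so its integral vanishes in the limit. The third term integrates to $\int_0^t v_0(s,\bar x(s))\,\d s$, independent of $j$. The middle term is the delicate one: I need $\int_0^t\bigl(v_j(s,\bar x(s))-v_0(s,\bar x(s))\bigr)\,\d s\to 0$. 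The hypothesis only gives $v_j(\cdot,x)\wto v_0(\cdot,x)$ in $\L1$ for each \emph{fixed} $x$, i.e. testing against $\L\infty$ functions of $s$ alone, whereas here the spatial slot is filled by the moving point $\bar x(s)$.

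The main obstacle is therefore precisely this middle term, and I would handle it by a freezing/partition argument. Fix $\epsilon>0$; using uniform continuity of $\bar x$ on $[0,T]$, partition $[0,T]$ into finitely many intervals $I_k$ on each of which $\bar x$ varies by at most $\epsilon$, and pick a sample point $\xi_k\in\overline{B}_R$ with $|\bar x(s)-\xi_k|\le\epsilon$ for $s\in I_k$. On each $I_k$ replace $\bar x(s)$ by $\xi_k$: the error incurred is at most $L\epsilon$ in norm (uniformly in $j$, using the common Lipschitz constant), hence at most $2L\epsilon T$ after integrating over all of $[0,T]$ for both $v_j$ and $v_0$. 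For the finitely many frozen integrals $\int_{I_k}\bigl(v_j(s,\xi_k)-v_0(s,\xi_k)\bigr)\,\d s$, the weak-$\L1$ convergence applies directly (test $v_j(\cdot,\xi_k)\wto v_0(\cdot,\xi_k)$ against $\caratt{I_k}\in\L\infty$), so each tends to $0$ as $j\to\infty$. Thus the middle term is $\le C\epsilon$ in the limsup for every $\epsilon$, hence $0$. Passing to the limit in the integral equation gives $\bar x(t)=a+\int_0^t v_0(s,\bar x(s))\,\d s$, so $\bar x$ solves the limit Cauchy problem; by uniqueness under \textbf{(A0)}, $\bar x=x_0$. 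Since every subsequence of $(x_j)$ has a further subsequence converging uniformly to $x_0$, the whole sequence converges, and in particular $x_j(t)\to x_0(t)$ for every $t\in[0,T]$.
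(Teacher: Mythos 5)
Your proof is correct, and the decisive technical ingredient is the same as in the paper: a partition/freezing argument that reduces $\int_0^t\bigl(v_j(s,y(s))-v_0(s,y(s))\bigr)\d s\to 0$ along a continuous curve $y$ to the hypothesis of weak $\L1$ convergence at finitely many frozen spatial points, at the price of an $O(\epsilon)$ error controlled by the common Lipschitz constant. Where you differ is in the surrounding architecture. The paper never invokes compactness: it writes the difference $x_j(t)-x_0(t)$ directly, splits off the term $\int_0^t\bigl[v_j(s,x_j(s))-v_j(s,x_0(s))\bigr]\d s$, applies Gr\"onwall to obtain the quantitative bound $\modulo{x_j(t)-x_0(t)}\leq\modulo{\alpha_j(t)}e^{Lt}$ with $\alpha_j(t)=\int_0^t\bigl[v_j(s,x_0(s))-v_0(s,x_0(s))\bigr]\d s$, and then runs the freezing argument along the \emph{known} limit trajectory $x_0$. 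You instead extract a uniformly convergent subsequence via Arzel\`a--Ascoli, run the freezing argument along the unknown limit $\bar x$, identify $\bar x=x_0$ by uniqueness for the limit ODE, and conclude by the standard subsequence principle. Both routes are sound; the paper's is shorter and yields an explicit error estimate in terms of $\alpha_j$, while yours is the more generic compactness template and would survive in settings where one cannot compare directly with $x_0$ (e.g.\ if uniqueness for the approximating problems failed). One cosmetic slip: the bound $\modulo{v_j(t,x)}\leq C(1+R)$ on the ball of radius $R$ comes from the sublinear growth condition in \textbf{(A0)}, not from the Lipschitz bound as you wrote; this does not affect the argument.
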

\begin{proof}
For a fixed $t$ consider the obvious identity
\begin{align*}
x_j(t)-x_0(t) 
&= 
\int_0^t \left[v_j(s,x_j(s))-v_j(s,x_0(s))\right]\d s
\\
&+
\int_0^t \left[v_j(s,x_0(s))-v_0(s,x_0(s))\right]\d s.
\end{align*}
Observing that
\[
\modulo{v_j(s,x_j(s))-v_j(s,x_0(s))}\leq L\modulo{x_j(s)-x_0(s)}
\qquad\mbox{for all }\; s,
\]
we obtain, by Gronwall's inequality, the following estimate:
\[
\modulo{x_j(t)-x_0(t)}\leq |\alpha_j(t)|e^{Lt},
\]
where
\[
\alpha_j(t) = \int_0^t \left[v_j(s,x_0(s))-v_0(s,x_0(s))\right]\d s.
\]

To complete the proof it remains to show that $\lim_{j\to\infty}|\alpha_j(t)|=0$. For this purpose, take a sequence 
$(\tau_i)_{i=0}^N$ such that
\[
0 = \tau_0<\tau_1<\cdots<\tau_N=t\quad\mbox{and}\quad \tau_{i}-\tau_{i-1}=\frac{t}{N} \quad\mbox{for all }\;i,
\]
and consider the identity
\begin{align*}
\alpha_j(t) 
&=
\sum_{i=1}^{N}\int_{\tau_{i-1}}^{\tau_i}\left[v_j(s,x_0(s))-v_j(s,x_0(\tau_{i-1}))\right]\d s
\\
&+
\sum_{i=1}^{N}\int_{\tau_{i-1}}^{\tau_i}\left[v_j(s,x_0(\tau_{i-1}))-v_0(s,x_0(\tau_{i-1}))\right]\d s
\\
&+
\sum_{i=1}^{N}\int_{\tau_{i-1}}^{\tau_i}\left[v_0(s,x_0(\tau_{i-1}))-v_0(s,x_0(s))\right]\d s.
\end{align*}
By Assumption \textbf{(A0)}, we have
\[
\modulo{v_j(s,x_0(s))-v_j(s,x_0(\tau_{i-1}))}\leq L\modulo{x_0(s)-x_0(\tau_{i-1})}
\]
for all $i$, $j$, and $s$.
Moreover, by applying the standard ODE's technique, we can easily verify that
\[
\modulo{x_0(s) - x_0(\tau_{i-1})}\leq \frac{Mt}{N}\qquad\mbox{for all}\; s\in [\tau_{i-1},\tau_i],
\]
where $M>0$ depends only on $t$, $a$, and $C$. Therefore,
\[
|\alpha_j(t)|\leq \frac{2LMt^2}{N} + 
\sum_{i=1}^{N}\Big|\int_{\tau_{i-1}}^{\tau_i}\left[v_j(s,x_0(\tau_{i-1}))-v_0(s,x_0(\tau_{i-1}))\right]\d s\Big|.
\]
Passing to the limit as $j\to \infty$ and then as $N\to\infty$, we get $\lim_{j\to\infty}|\alpha_j(t)|=0$.

\end{proof}

\begin{lemma}
\label{lem:weakmu}
Let $\theta\in \mathcal{P}(\reali^n)$ and $\mu_j\colon[0,T]\to \mathcal{P}(\reali^n)$ 
denote a solution to the Cauchy problem
\[
\begin{cases}
\mu_t + \div{}\hspace{-2pt}_x\left(v_j(t,x)\mu\right)=0,\\
\mu_j(0)=\theta,
\end{cases}
\quad j = 0,1,2,\ldots,
\]
where the vector fields $v_j$ satisfy the assumptions of Lemma~\ref{lem:weakf}.
If $v_j(\cdot,x)\to v_0(\cdot,x)$ weakly in $\L1\left([0,T];\reali^n\right)$ for all $x$, then
\[
\mu_j(t)\to \mu_0(t)\;\;\mbox{narrowly\qquad for all  }\;\; t\in [0,T].
\]
\end{lemma}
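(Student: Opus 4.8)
The plan is to combine the representation formula for distributional solutions of the continuity equation with the pointwise convergence already established in Lemma~\ref{lem:weakf}. By~\eqref{eq:representation}, each solution has the explicit form $\mu_j(t) = \theta\circ (V_j)_t^0$, where $(V_j)_t^0$ is the flow of the vector field $v_j$ evaluated from time $0$ to time $t$; concretely, $(V_j)_0^t(a) = x_j(t)$ with $x_j$ the solution of the Cauchy problem $\dot y = v_j(s,y)$, $y(0)=a$ appearing in Lemma~\ref{lem:weakf}. Thus the map $f_j := (V_j)_0^t\colon\reali^n\to\reali^n$ is exactly the ``flow to time $t$'' map, and $\mu_j(t) = \theta\circ f_j^{-1}$.

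First I would fix $t\in[0,T]$ and invoke Lemma~\ref{lem:weakf}: since the $v_j$ satisfy \textbf{(A0)} with common constants and $v_j(\cdot,x)\to v_0(\cdot,x)$ weakly in $\L1$ for every $x$, we get $x_j(t)\to x_0(t)$ for every starting point $a$, i.e.\ $f_j(a)\to f_0(a)$ pointwise on $\reali^n$. Each $f_j$ is continuous (indeed a diffeomorphism, by the properties of flows recalled earlier). Then I would apply Lemma~\ref{lem:f_j} with this $\theta$ and these $f_j$ to conclude $\theta\circ f_j^{-1}\to\theta\circ f_0^{-1}$ narrowly, which is precisely $\mu_j(t)\to\mu_0(t)$ narrowly. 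Since $t$ was arbitrary, this gives the claim for all $t\in[0,T]$.

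There is essentially no hard step here: the lemma is a bookkeeping assembly of three facts already in hand (the representation formula, Lemma~\ref{lem:weakf}, and Lemma~\ref{lem:f_j}). The only point requiring a line of justification is the identification $\mu_j(t)=\theta\circ f_j^{-1}$ with $f_j$ continuous — this follows from~\eqref{eq:representation} together with the stated regularity of flows of vector fields satisfying \textbf{(A0)}, so that Lemma~\ref{lem:f_j} genuinely applies. One should note that the uniform constants $L,C$ are needed in Lemma~\ref{lem:weakf} but play no further role once pointwise convergence of the flow maps is secured; after that, Lemma~\ref{lem:f_j} needs no uniformity. Hence the proof is short and the main (mild) obstacle is simply to state the reduction cleanly rather than to overcome any analytic difficulty.
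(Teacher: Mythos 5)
Your proof is correct and follows exactly the paper's argument: identify $\mu_j(t)=\theta\circ\bigl((V_j)_0^t\bigr)^{-1}$ via the representation formula, obtain pointwise convergence of the flow maps from Lemma~\ref{lem:weakf}, and conclude with Lemma~\ref{lem:f_j}. No discrepancy to report.
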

\begin{proof}
Let $V_j$ denote the flow of $v_j$. In view of Lemma~\ref{lem:weakf}, the sequence
$(V_j)_{0}^t$ converges pointwise to $(V_0)_{0}^t$. Since $\mu_j(t) = \theta\circ (V_j)_{t}^{0}$,
the statement follows from Lemma~\ref{lem:f_j}.
\end{proof}

\begin{lemma}
\label{lem:convex}
Let $\phi\in \C0(\reali^m;\reali^n)$ and $\theta\in \mathcal{P}(\reali^m)$. 
If the support of $\theta$ is contained in a compact set $K\subset\reali^m$, then
\[
\int_K\phi(x)\d\theta(x)\in \co\phi(K),
\]
where $\co\phi(K)$ is the convex hull of $\phi(K)$.
\end{lemma}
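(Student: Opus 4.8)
The plan is to show that $\int_K \phi\,d\theta$ lies in the closed convex hull $\overline{\co}\,\phi(K)$ (which, since $\phi(K)$ is compact by continuity of $\phi$, equals $\co\phi(K)$ by Carathéodory's theorem), by separating hyperplanes. Suppose for contradiction that the point $p := \int_K \phi(x)\,d\theta(x)$ does not belong to the closed convex set $\overline{\co}\,\phi(K)$. Then by the Hahn--Banach separation theorem there exist $\xi\in\reali^n$ and $\alpha\in\reali$ with $\xi\cdot p > \alpha \geq \xi\cdot y$ for every $y\in\overline{\co}\,\phi(K)$, in particular $\xi\cdot\phi(x)\leq\alpha$ for all $x\in K$.

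Next I would integrate this pointwise inequality against $\theta$. Since $\theta$ is a probability measure supported on $K$, and $x\mapsto\xi\cdot\phi(x)$ is bounded and continuous on the compact set $K$, we may apply the change-of-variables / linearity of the integral to get
\[
\xi\cdot p = \xi\cdot\int_K\phi(x)\,d\theta(x) = \int_K \xi\cdot\phi(x)\,d\theta(x) \leq \int_K \alpha\,d\theta(x) = \alpha,
\]
where the last equality uses $\theta(\reali^m)=\theta(K)=1$. This contradicts $\xi\cdot p>\alpha$, so $p\in\overline{\co}\,\phi(K)=\co\phi(K)$.

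The main point requiring a little care is the interchange of the linear functional $\xi\cdot(\,\cdot\,)$ with the vector-valued integral $\int_K\phi\,d\theta$ — i.e. justifying that the integral of the $\reali^n$-valued map $\phi$ is defined componentwise and commutes with linear maps; this is immediate from the definition of the integral of a vector-valued function (integrate each of the $n$ coordinates $\phi_i$, each bounded and continuous on $K$, hence $\theta$-integrable). A secondary point is the identification $\overline{\co}\,\phi(K) = \co\phi(K)$: the convex hull of a compact subset of $\reali^n$ is compact (hence closed) by Carathéodory's theorem, so the two coincide and no closure is needed in the statement. Everything else is a direct application of Hahn--Banach separation, so I do not anticipate any real obstacle.
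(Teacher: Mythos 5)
Your proof is correct, but it takes a genuinely different route from the paper's. The paper approximates $\theta$ narrowly by discrete probability measures $\theta_k=\sum_i c_i^k\delta_{x_i^k}$ with atoms in $K$ (citing the density of such measures in $\mathcal{P}(K)$), observes that each $\int_K\phi\,\d{\theta_k}=\sum_i c_i^k\phi(x_i^k)$ is literally a convex combination of points of $\phi(K)$, and passes to the limit using the closedness of $\co\phi(K)$. You instead argue by duality: a strict separation of $\int_K\phi\,\d\theta$ from $\overline{\co}\,\phi(K)$ would contradict the monotonicity of the integral applied to $x\mapsto\xi\cdot\phi(x)\leq\alpha$ on $K$. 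Your argument is more self-contained in that it avoids invoking the density of discrete measures in the narrow topology, at the cost of invoking Hahn--Banach separation; note that both routes ultimately rest on the same compactness fact (the convex hull of the compact set $\phi(K)$ in $\reali^n$ is closed, via Carath\'eodory), which you justify explicitly and the paper merely asserts. One small point of hygiene in your write-up: the reduction of the vector-valued integral to componentwise integrals and its commutation with the linear functional $\xi\cdot(\,\cdot\,)$, which you flag as the step needing care, is indeed immediate and handled adequately.
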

\begin{proof}
Choose a sequence of probability measures of the form
\[
\theta_k = \sum_{i=1}^k c_i^k\delta_{x_i^k},\quad\mbox{with }\;x_i^k\in K\;\;\mbox{for all }\; i\mbox{ and }k,
\]
and such that $\theta_k\to\theta$ narrowly. This can be done as in~\cite[Example 8.1.6]{BogachevBook}.
Clearly,
\[
\int_K\phi(x)\d\theta_k(x) \to \int_K\phi(x)\d\theta(x).
\]
At the same time, we have
\[
\int_K\phi(x)\d\theta_k(x) = \sum_{i=1}^k c_i^k\phi(x_i^k)\in \co \phi(K).
\]
Since $\co \phi(K)$ is closed, the proof is complete.
\end{proof}

\section{Existence}
\label{sec:existence}

In this section we study the existence of optimal controls for $(P)$. 
Throughout the section, we make the following assumption
\[
\mbox{\textbf{(A1)}}\;\;
\begin{cases}
\mbox{The map $v\colon [0,T]\times \reali^n\times U\to\reali^n$ is continuous; there are positive}\\
\mbox{constants $L$, $C$ such that, for all $t\in [0,T]$, $u\in U$, and $x,x'\in\reali^n$,}\\
\modulo{v(t,x,u)-v(t,x',u)}\leq L|x-x'|
\quad\mbox{ and }\quad
\modulo{v(t,x,u)}\leq C\left(1+|x|\right).
\end{cases}
\]

First, we prove that $(P)$ has solutions within the space of generalized controls. Then, under additional assumptions,
we show that every trajectory corresponding to a generalized control can be produced by a usual one.

\begin{theorem}
\label{thm:exitence}
If the target set $A$ is closed, then $(P)$ has a solution within the space $\mathcal{Y}\left([0,T];U\right)$ of generalized controls.
\end{theorem}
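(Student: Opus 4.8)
The plan is to run the direct method of the calculus of variations on the compact set $\mathcal{Y}([0,T];U)$, using the limit lemmas of the previous subsection to pass to the limit in the trajectories and the closedness of $A$ to obtain upper semicontinuity of the cost. First I would set $m=\sup\{\mu(T)(A):\nu\in\mathcal{Y}([0,T];U)\}$, where $\mu$ denotes the trajectory corresponding to $\nu$; since each $\mu(T)$ is a probability measure, $0\le m\le 1$, so $m$ is finite. I then take a maximizing sequence $(\nu_j)$ with trajectories $\mu_j$, so that $\mu_j(T)(A)\to m$, and use Proposition~\ref{prop:compact} to extract a subsequence (not relabelled) converging narrowly to some $\nu^*\in\mathcal{Y}([0,T];U)$; write $\mu^*$ for the trajectory corresponding to $\nu^*$. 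It remains to prove that $\mu_j(T)\to\mu^*(T)$ narrowly and that $\mu^*(T)(A)\ge m$.

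For the convergence of trajectories I would work with the averaged vector fields $\bar v_j(t,x)=\int_U v(t,x,\omega)\,\d{(\nu_j)_t(\omega)}$ and $\bar v^*$, defined as in~\eqref{eq:average_vf} via the disintegrations of $\nu_j$ and $\nu^*$. Averaging the two inequalities in \textbf{(A1)} over the probability measures $(\nu_j)_t$ shows at once that each $\bar v_j$, and also $\bar v^*$, satisfies \textbf{(A0)} with the same constants $L$ and $C$ (measurability in $t$ being part of the notion of disintegration). Hence, by Lemma~\ref{lem:weakmu} applied with $v_0:=\bar v^*$, everything reduces to checking that for each fixed $x\in\reali^n$ one has $\bar v_j(\cdot,x)\to\bar v^*(\cdot,x)$ weakly in $\L1([0,T];\reali^n)$. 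To see this, fix $x$ and $g\in\L\infty([0,T])$: for fixed $x$ the map $(t,\omega)\mapsto g(t)\,v(t,x,\omega)$ is measurable in $t$, continuous in $\omega$ and dominated by the constant $\|g\|_{\L\infty}\,C(1+|x|)\in\L1([0,T])$, so it is an admissible (bounded Carathéodory) integrand for narrow convergence of Young measures, and by the disintegration formula
\[
\int_0^T g(t)\,\bar v_j(t,x)\,\d t=\int_{[0,T]\times U}g(t)\,v(t,x,\omega)\,\d{\nu_j}\,.
\]
By the definition of the narrow convergence $\nu_j\to\nu^*$ (see the Appendix), the right-hand side tends to $\int_{[0,T]\times U}g(t)\,v(t,x,\omega)\,\d{\nu^*}=\int_0^T g(t)\,\bar v^*(t,x)\,\d t$, and since $g$ was arbitrary this is exactly the asserted weak $\L1$ convergence. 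Lemma~\ref{lem:weakmu} now yields $\mu_j(t)\to\mu^*(t)$ narrowly for every $t\in[0,T]$, in particular for $t=T$.

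It remains to pass to the limit in the cost. Since $A$ is closed, the portmanteau characterisation of narrow convergence gives $\limsup_{j\to\infty}\mu_j(T)(A)\le\mu^*(T)(A)$, and together with $\mu_j(T)(A)\to m$ this forces $\mu^*(T)(A)=m$. Hence $\nu^*\in\mathcal{Y}([0,T];U)$ is a solution of $(P)$. I expect the one genuinely delicate point to be the middle step — converting the narrow convergence of the Young measures $\nu_j$ into weak $\L1$ convergence of the averaged fields $\bar v_j(\cdot,x)$ for each $x$, so that Lemma~\ref{lem:weakmu} applies — while the verification of \textbf{(A0)} for $\bar v_j$, the invocation of the limit lemmas, and the semicontinuity coming from the closedness of $A$ are all routine.
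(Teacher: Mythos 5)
Your proposal is correct and follows essentially the same route as the paper: compactness of $\mathcal{Y}([0,T];U)$, averaged vector fields satisfying \textbf{(A0)}, weak $\L1$ convergence of $\bar v_j(\cdot,x)$ feeding into Lemma~\ref{lem:weakmu}, and the portmanteau inequality for the closed set $A$. The only divergence is in the middle step: you verify the weak $\L1$ convergence by hand, testing the narrow convergence of $\nu_j$ against the Carath\'eodory integrand $(t,\omega)\mapsto g(t)\,v(t,x,\omega)$, whereas the paper first extends $v(t,x,\cdot)$ continuously and boundedly from $U$ to all of $\reali^m$ via Tietze and then invokes Proposition~\ref{prop:YMlimit}; note that your integrand is a priori defined only on $[0,T]\times U$, so strictly speaking you need the same (harmless, since all the Young measures are supported in $[0,T]\times U$) extension before it qualifies as a test integrand for narrow convergence in $\mathcal{Y}([0,T];\reali^m)$.
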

\begin{proof}
Let $(\nu_j)$ be a maximizing sequence of generalized controls.
In view of Proposition~\ref{prop:compact}, we may assume, without loss of generality,
that $(\nu_{j})$ converges to some $\nu_0$. 

Consider the averaged vector fields $v_j$ defined by
\[
v_j(t,x) = \int_{U} v(t,x,\omega)\d{(\nu_{j})_t(\omega)},\qquad
j = 0,1,2,\ldots\,.
\]
It is easy to see that each $v_j$ satisfies Assumption \textbf{(A0)} 
with constants $L$ and $C$ as in Assumption \textbf{(A1)}. 
Hence, for each $j$, there exists a unique trajectory $\mu_j = \mu_j(t)$ corresponding 
to $\nu_j$.

For a given $x\in\reali^n$, we denote by $(t,u)\mapsto \bar v(t,x,u)$  a continuous extension of 
$(t,u)\mapsto v(t,x,u)$ to $[0,T]\times \reali^m$ with the property that
\[
\modulo{\bar v(t,x,u)}\leq C\left(1+|x|\right)\qquad\mbox{for all}\;\; t\in [0,T]\;\mbox{ and }\; u\in\reali^m.
\]
Such an extension exists by the Tietze theorem. It is clear that
\[
v_j(t,x) = \int_{\reali^m} \bar v(t,x,\omega)\d{(\nu_{j})_t(\omega)},\qquad
j = 0,1,2,\ldots\,.
\]

By using Proposition~\ref{prop:YMlimit}, we get
\[
v_j\left(\cdot,x\right)\to v_0(\cdot,x)\qquad\mbox{weakly in}\quad \L1\left([0,T];\reali^m\right)
\]
for every $x$. Now Lemma~\ref{lem:weakmu} asserts that
\[
\mu_{j}(T) \to \mu_0(T)\quad\mbox{narrowly}.
\] 
Therefore,
\[
\sup(P) = \limsup_{j\to\infty}\mu_{j}(T)(A)\leq \mu_0(T)(A),
\]
so that $\nu_0$ is optimal for $(P)$. This completes the proof.
\end{proof}

\begin{corollary}
\label{cor:existence}
Let $v$ have the form
\[
v(t,x,u) = v_0(t,x) + \sum_{i=1}^l \phi_i(t,u)v_i(t,x)
\]
for some real-valued functions $\phi_i$. Let the set
\[
\Phi(t,U) = 
\begin{pmatrix}
\phi_1(t,U)\\
\cdots\\
\phi_l(t,U)
\end{pmatrix}
\subset
\reali^l
\]
be convex, and the target set $A$ be closed. Then $(P)$ has a solution within the space $\mathcal{U}$ of
usual controls.
\end{corollary}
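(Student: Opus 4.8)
The plan is to deduce the corollary from Theorem~\ref{thm:exitence} by showing that, under the affine-in-control structure and the convexity of $\Phi(t,U)$, any generalized control can be replaced by a usual control producing the \emph{same} trajectory, hence the same value of the cost functional. First I would invoke Theorem~\ref{thm:exitence} to obtain a generalized control $\nu\in\mathcal{Y}([0,T];U)$ that is optimal for $(P)$, with disintegration $(\nu_t)_{t\in[0,T]}$ and corresponding averaged vector field $\bar v(t,x)=\int_U v(t,x,\omega)\d\nu_t(\omega)$. Using the special form of $v$, linearity of the integral gives
\[
\bar v(t,x) = v_0(t,x) + \sum_{i=1}^l \Big(\int_U \phi_i(t,\omega)\d\nu_t(\omega)\Big) v_i(t,x),
\]
so the whole effect of $\nu$ at time $t$ is encoded in the single vector $\beta(t) = \big(\int_U\phi_1(t,\omega)\d\nu_t(\omega),\dots,\int_U\phi_l(t,\omega)\d\nu_t(\omega)\big)\in\reali^l$.

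The key step is to show $\beta(t)\in\Phi(t,U)$ for (almost) every $t$, and then to select a measurable $u(t)\in U$ with $\Phi(t,u(t))=\beta(t)$. The membership $\beta(t)\in\co\Phi(t,U)$ follows immediately from Lemma~\ref{lem:convex} applied with $\phi=\Phi(t,\cdot)\colon\reali^m\to\reali^l$, $\theta=\nu_t$, and $K=U$; since $\Phi(t,U)$ is assumed convex and (being the continuous image of the compact set $U$) compact, we get $\beta(t)\in\Phi(t,U)$ directly, with no need to take convex hulls. Thus for each $t$ the set $\{u\in U\colon \Phi(t,u)=\beta(t)\}$ is nonempty; it is also closed, being the preimage of a point under the continuous map $u\mapsto\Phi(t,u)$ intersected with the closed set $U$. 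To pick a measurable selection $t\mapsto u(t)$ I would apply a standard measurable selection theorem (Filippov's lemma, or the Kuratowski--Ryll-Nardzewski theorem): the set-valued map $t\mapsto\{u\in U\colon\Phi(t,u)=\beta(t)\}$ has closed nonempty values, and measurability of $\beta$ together with joint continuity of $\Phi$ makes this map measurable, so a measurable selector exists. This $u(\cdot)$ lies in $\mathcal{U}$.

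Finally, I would verify that the Young measure associated with this $u$ reproduces the trajectory of $\nu$: by construction the averaged vector field for $u$ is $v(t,x,u(t)) = v_0(t,x)+\sum_{i=1}^l\phi_i(t,u(t))v_i(t,x) = v_0(t,x)+\sum_{i=1}^l\beta_i(t)v_i(t,x) = \bar v(t,x)$, so both controls drive the \emph{same} continuity equation \eqref{eq:cauchybar} from the same initial datum $\theta$; by uniqueness of distributional solutions (stated after the definition of distributional solution) the trajectories coincide for all $t\in[0,T]$, in particular $\mu(T)(A)$ is unchanged. Hence $u$ is optimal for $(P)$ within $\mathcal{U}$, and since $\mathcal{U}\subset\mathcal{Y}([0,T];U)$ this value equals $\sup(P)$. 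The main obstacle is the measurability bookkeeping: ensuring $\beta$ is a (Lebesgue-) measurable function of $t$ from the disintegration, checking that $\Phi(t,U)$ is genuinely compact so that membership in the convex hull upgrades to membership in $\Phi(t,U)$ itself, and citing the appropriate measurable selection theorem with hypotheses that match the regularity actually available here; the analytic content is otherwise light, resting entirely on Lemma~\ref{lem:convex} and uniqueness for the continuity equation.
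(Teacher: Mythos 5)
Your proposal is correct and follows essentially the same route as the paper: invoke Theorem~\ref{thm:exitence} for an optimal generalized control, use the affine structure to reduce its effect to the vector $\int_U\Phi(t,\omega)\d\nu_t(\omega)$, place that vector in $\Phi(t,U)$ via Lemma~\ref{lem:convex} together with the convexity assumption, and extract a measurable selector by Filippov's lemma. The only difference is that you spell out the measurability and uniqueness-of-trajectory bookkeeping that the paper leaves implicit, which is harmless.
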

\begin{proof}
Let $\nu$ be an optimal generalized control. The corresponding averaged vector field is given by
\[
\bar v(t,x) = \int_{U} v(t,x,\omega)\d{\nu_t(\omega)} = v_0(t,x) + \sum_{i=1}^l \int_{U}\phi_i(t,\omega)\d{\nu_t(\omega)}v_i(t,x).
\]
Thus to complete the proof, it suffice to show that there exists a measurable function $u\colon[0,T]\to U$ such that
\[
\int_U\Phi(t,\omega)\d{\nu_t(\omega)} = 
\begin{pmatrix}
\int_U\phi_1(t,\omega)\d{\nu_t(\omega)}\\
\cdots\\
\int_U\phi_l(t,\omega)\d{\nu_t(\omega)}
\end{pmatrix}
=
\begin{pmatrix}
\phi_1\left(t,u(t)\right)\\
\cdots\\
\phi_l\left(t,u(t)\right)
\end{pmatrix}
=\Phi\left(t,u(t)\right)
\]
for almost every $t\in [0,T]$.
The latter follows from Filippov's lemma~\cite{Filippov62}, since
\[
\int_U\Phi(t,\omega)\d{\nu_t(\omega)} \in \Phi(t,U) \quad\mbox{for a.e.  }t\in [0,T],
\]
according to Lemma~\ref{lem:convex}.
\end{proof}

\section{Necessary optimality condition}
\label{sec:PMP}

\subsection{Statement}

Throughout this section, in addition to \textbf{(A1)}, we make the assumption\\
\begin{tabular}{c l}
\textbf{(A2)} &
The map $v=v(t,x,u)$ is twice continuously differentiable \\
& with respect to $x$.
\end{tabular}

\begin{theorem}
\label{thm:PMP}
Let $A$ be a compact set with the interior ball property
\footnote{i.e., $A$ is the union of closed balls of a certain positive radius $r$.}
and $\theta=\rho_0 \lambda$ with $\rho_0\in \C1(\mathbb{R}^n;\mathbb{R})$.
Let $\bar u$ be an optimal control for $(P)$ and $\bar\mu$ be the corresponding trajectory. 
Then for almost every $\tau\in [0,T]$, we have
\begin{multline}
\label{eq:optcon}
\int_{\partial A^{\tau}}
\bar\rho(\tau,x)\,
{v}\left(\tau,x,\bar u(\tau)\right)\cdot {n}_{A^{\tau}}(x)\d{\sigma(x)}\\
=\min_{\omega\in U}
 \int_{\partial A^{\tau}}
\bar\rho(\tau,x)\,
{v}(\tau,x,\omega)\cdot {n}_{A^{\tau}}(x)\d{\sigma(x)}.
\end{multline}
Here $A^{\tau} = \bar V_T^{\tau}(A)$ with $\bar V$ being the flow of the vector field $(t,x)\mapsto {v}\left(t,x,\bar u(t)\right)$,
${n}_{A^{\tau}}(x)$ is the measure theoretic outer unit normal to $A^{\tau}$ at $x$,
$\sigma$ is the $(n-1)$-dimensional Hausdorff measure, 
$\bar\rho(t,\cdot)$ is the density of $\bar\mu(t)$.
\end{theorem}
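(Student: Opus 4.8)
The proof will use a needle-variation (spike) argument familiar from the classical Pontryagin maximum principle, adapted to the flow/pushforward structure of the continuity equation. Fix a Lebesgue point $\tau$ of $\bar u$ and of all the relevant integrands, fix $\omega\in U$, and for small $h>0$ consider the spike control $u_h$ that equals $\omega$ on $[\tau-h,\tau]$ and equals $\bar u$ elsewhere. Let $\mu_h$ be the corresponding trajectory; since $u_h=\bar u$ on $[\tau,T]$, optimality of $\bar u$ gives $\mu_h(T)(A)\le\bar\mu(T)(A)$ for every such $h$ and $\omega$. The strategy is to compute $\frac{d}{dh}\big|_{h=0^+}\mu_h(T)(A)$ and show it is nonnegative, which will turn out to be exactly the inequality that \eqref{eq:optcon} encodes.

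First I would reduce the terminal quantity to an integral over a fixed reference configuration. Using the representation $\mu(t)=\theta\circ V_t^0$ and the change-of-variables formula, write $\mu_h(T)(A)=\theta\big(( V_h)_T^0{}^{-1}(A)\big)=\theta\big((V_h)_0^T(A)\big)$ where $V_h$ is the flow of $v(t,x,u_h(t))$. Equivalently, pulling everything back by $\bar V$, the perturbed terminal set is $\bar V_T^0\circ(\text{something})$; the cleanest route is to observe that on $[\tau,T]$ the dynamics are unchanged, so $\mu_h(T)(A)=\mu_h(\tau)\big(\bar V_T^\tau(A)\big)=\mu_h(\tau)(A^\tau)$. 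Thus the whole problem localizes at time $\tau$: we must differentiate $h\mapsto\mu_h(\tau)(A^\tau)$ at $h=0^+$, where $\mu_h(\tau)$ solves the continuity equation with the spiked field. Writing $\mu_h(\tau)=\bar\rho(\tau,\cdot)\lambda$ transported over $[\tau-h,\tau]$ by the difference of the two flows, a standard first-order expansion in $h$ gives $\mu_h(\tau)=\bar\mu(\tau)$ pushed forward by a near-identity map $\mathrm{Id}+h\,w+o(h)$, where the infinitesimal displacement is $w(x)=v(\tau,x,\omega)-v(\tau,x,\bar u(\tau))$ evaluated at the Lebesgue point (this is where \textbf{(A1)} and continuity of $v$ enter, and where one must be careful that $\bar u$ is only measurable — hence the Lebesgue-point restriction on $\tau$).

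Next I would compute the derivative of the transported mass. For a near-identity transport $x\mapsto x+h w(x)$ applied to the absolutely continuous measure $\bar\rho(\tau,\cdot)\lambda$, one has the first-order formula
\[
\frac{d}{dh}\Big|_{h=0^+}\Big(\bar\rho(\tau,\cdot)\lambda \text{ transported by }\mathrm{Id}+hw\Big)(A^\tau)
= -\int_{A^\tau}\div\big(\bar\rho(\tau,\cdot)\,w\big)\d\lambda,
\]
which, since $\bar\rho(\tau,\cdot)w\in \C1$ (here \textbf{(A2)} and $\rho_0\in\C1$ are used, via the explicit density formula for $\bar\rho$) and $A^\tau$ has the interior ball property — in particular finite perimeter with a measure-theoretic outer normal $n_{A^\tau}$ defined $\sigma$-a.e.\ on $\partial A^\tau$ — equals by the Gauss–Green theorem $-\int_{\partial A^\tau}\bar\rho(\tau,x)\,w(x)\cdot n_{A^\tau}(x)\d\sigma(x)$. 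Substituting $w=v(\tau,\cdot,\omega)-v(\tau,\cdot,\bar u(\tau))$ and imposing that this derivative be $\le 0$ (from $\mu_h(\tau)(A^\tau)\le\bar\mu(\tau)(A^\tau)$) yields
\[
\int_{\partial A^\tau}\bar\rho(\tau,x)\,v(\tau,x,\bar u(\tau))\cdot n_{A^\tau}(x)\d\sigma(x)
\le \int_{\partial A^\tau}\bar\rho(\tau,x)\,v(\tau,x,\omega)\cdot n_{A^\tau}(x)\d\sigma(x)
\]
for every $\omega\in U$ and a.e.\ $\tau$, which is precisely \eqref{eq:optcon}. (One should also note $A^\tau=\bar V_T^\tau(A)$ inherits the interior ball property, or at least finite perimeter and a well-defined normal, because $\bar V_T^\tau$ is a $\C1$ diffeomorphism by \textbf{(A2)}; this transfer of regularity is a point to handle with a little care.)

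**Main obstacle.** The delicate step is making the first-order expansion $\mu_h(\tau)=\big(\mathrm{Id}+hw\big)_\#\bar\mu(\tau)+o(h)$ rigorous and, above all, justifying the interchange of the $h$-derivative with the integral over the \emph{fixed} set $A^\tau$ when the integrand is only a pushforward of an $\L1$ density: one needs enough regularity ($\C1$ density and $\C1$ vector field, hence $\C1$ flows) to write the transported density via the Jacobian formula $\bar\rho\circ(\mathrm{Id}+hw)^{-1}/\det(I+hDw)$ and differentiate under the integral sign, and one needs the interior-ball (finite-perimeter) hypothesis on $A$ precisely so that the Gauss–Green/divergence theorem applies to $A^\tau$ with a genuine surface integral against $n_{A^\tau}$. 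Controlling the $o(h)$ remainder uniformly — using the Gronwall-type flow estimates recalled in the preliminaries together with the Lebesgue-point property of $\tau$ for $t\mapsto v(t,x,\bar u(t))$ — is the technical heart of the argument; everything else is bookkeeping with the change-of-variables formula and the semigroup identity $\bar V_\tau^T\circ\bar V_T^\tau=\mathrm{Id}$.
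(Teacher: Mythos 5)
Your overall route is the same as the paper's: a needle variation at a Lebesgue point $\tau$, the reduction $\mu_h(T)(A)=\mu_h(\tau)(A^\tau)$, replacement of the spiked flow on $[\tau-h,\tau]$ by the flow of $w(x)=v(\tau,x,\omega)-v(\tau,x,\bar u(\tau))$, and then the Reynolds/Gauss--Green computation of the directional derivative $-\int_{\partial A^\tau}\bar\rho\,(w\cdot n_{A^\tau})\d\sigma$. That skeleton is exactly Lemma~\ref{lem:needle} plus Proposition~\ref{prop:dirderive}.

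There is, however, one genuine gap, and it sits precisely at the step you flag as ``the technical heart'' but do not actually carry out: passing from the sup-norm estimate
$\sup_{x\in A}\bigl|\bar V^{\tau-h}_{\tau}\circ\hat V^{\tau-h}_{\tau}{}^{-1}\cdots\bigr|=o(h)$ (closeness of the two transport maps) to the conclusion that the \emph{measures} of the two image sets differ by $o(h)$. For a general compact $A^\tau$ this implication is false: two diffeomorphisms that are $o(h)$-close in sup norm can move a set with a very irregular boundary so that the symmetric difference of the images has measure much larger than $o(h)$. The paper closes this gap with Proposition~\ref{prop:mainbound}, which shows
$\modulo{\theta\circ\phi_1(A)-\theta\circ\phi_2(A)}\le \mathrm{const}\cdot\sup_{x\in A}\modulo{\phi_1(x)-\phi_2(x)}$,
where the constant is controlled through the perimeter bound $\sigma(\partial A_t)\le n\alpha_n(\diam A)^n/(2^n(t+r))$ for the $t$-neighbourhoods of $A$; this bound is exactly what the interior ball property of radius $r$ buys, via the Reynolds identity $\lambda(A_{lb}\setminus A)=\int_0^{lb}\sigma(\partial A_t)\d t$. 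You instead attribute the interior ball hypothesis to the applicability of Gauss--Green (finite perimeter, existence of $n_{A^\tau}$); that is only its secondary, and much cheaper, role. Without the quantitative stability estimate of Proposition~\ref{prop:mainbound}, your first-order expansion $\mu_h(\tau)=(\mathrm{Id}+hw)_{\#}\bar\mu(\tau)+o(h)$ is an assertion about maps, not about the evaluation of the resulting measures on the fixed set $A^\tau$, and the differentiation of $h\mapsto\mu_h(\tau)(A^\tau)$ does not follow. Supplying this symmetric-difference estimate (and noting that $A$, not $A^\tau$, is where the interior ball radius is used, with the Lipschitz constants of the flows absorbed into the constant) is the missing ingredient; the rest of your argument then goes through as in the paper.
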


\begin{remark}
1. If $\partial A$ is an $(n-1)$-dimensional $\C2$ surface, then $A$ automatically has the interior ball property.
Moreover, each $\partial A^{\tau}$ is also an $(n-1)$-dimensional $\C2$ surface. 
Consequently, in this case $n_{A^{\tau}}(x)$ is the usual outer unit normal to $A^{\tau}$ 
at $x$, and $\sigma$ is the usual $(n-1)$-dimensional volume form.

2. The necessary optimality condition has a visual geometrical meaning. Let $\bar u=\bar u(t)$ be optimal.
Shift the target set $A$ along the vector field $(t,x)\mapsto v\left(t,x,\bar u(t)\right)$ backwards.
Denote the resulting image of $A$ at the time moment $\tau$ by $A^{\tau}$.
Then $\bar u(\tau)$ minimizes the outflow through $\partial A^{\tau}$ at almost every time moment $\tau$.
\end{remark}

The proof of Theorem~\ref{thm:PMP} is based on 
ideas of the Pontryagin Maximum Principle (see, e.g.,~\cite{BressanPiccoliBook}).
In addition, it relies on notions of the \emph{interior ball property} of a set and the \emph{directional derivative} 
of a real-valued function on $\mathcal{P}(\reali^n)$. We discuss these notions below, but first let us briefly outline the proof.

\paragraph{Sketch of the Proof:} Let $\bar u = \bar u(t)$ be an optimal control, $\bar\mu=\bar\mu(t)$ be the corresponding trajectory,
and $\bar V$ be the flow of $(t,x)\mapsto v\left(t,x,\bar u(t)\right)$.
Fix some $\tau\in [0,T]$ and consider the map $f(\theta) = \theta\circ \bar V^{\tau}_T(A)$ defined for all 
absolutely continuous $\theta$ with smooth density.
First, we show that $f$ is \emph{directionally differentiable} with respect to any vector field $w\in \C1(\reali^n;\reali^n)$ 
and compute its derivative $\partial_w f$. Then, we construct a needle variation $u_{\epsilon}$
of  $\bar u$ and show that
\[
\frac{d}{d\epsilon}\mu_{\epsilon}(T)(A)|_{\epsilon=0} = \partial_{\bar w}f\left(\bar\mu(\tau)\right)\quad\mbox{for almost all }\;\tau,
\]
where $\mu_{\epsilon}=\mu_{\epsilon}(t)$ is the trajectory corresponding to $u_{\epsilon}=u_{\epsilon}(t)$ and $\bar w$
is a certain vector field depending on $\tau$. Now, the necessary optimality condition is as follows:
\[
\partial_{\bar w}f\left(\bar\mu(\tau)\right)\leq 0\quad\mbox{for almost all }\;\tau.
\]


\subsection{Interior ball property}

Recall that a compact set $A$ has \emph{the interior ball property of radius $r>0$} if it is
the union of closed balls of radius $r$. Such sets have nice regularity properties;
in particular, $\partial A$ is $(n-1)$-rectifiable~\cite{Rataj} and
\begin{equation}
\label{eq:boundarybound}
\sigma(\partial A)\leq \frac{n \alpha_n (\diam A)^n}{2^nr},
\end{equation}
where $\alpha_n$ is the volume of the $n$-dimensional unit ball~\cite{AlvarezEtAl}. As a consequence, the measure theoretic
outer unit normal $n_A(x)$ to $A$ at $x$ exists for $\sigma$-a.e. $x\in \partial A$.

\begin{lemma}
\label{lem:preballs}
Let $\phi\colon\reali^n\to\reali^n$ be a diffeomorphism such that $\Lip(\phi^{-1})\leq b$. Then for any $A\subseteq \reali^n$,
we have
\[
\left(\phi(A)\right)_{\epsilon}\subseteq \phi(A_{b\epsilon})
\qquad\mbox{and}\qquad 
\left(\phi(A)\right)^\circ_{\epsilon}\subseteq \phi(A^\circ_{b\epsilon}).
\]
\end{lemma}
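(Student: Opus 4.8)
The plan is to unwind both inclusions directly from the definition of an $r$-neighbourhood together with the Lipschitz bound on $\phi^{-1}$; no compactness or differentiability beyond the stated hypotheses is needed. Fix $\epsilon>0$ and $A\subseteq\reali^n$, and take any $y\in(\phi(A))_\epsilon$. By definition there is a point $z\in\phi(A)$ with $|y-z|\le\epsilon$, and since $\phi$ is a bijection we may write $z=\phi(a)$ with $a\in A$. Put $x:=\phi^{-1}(y)$. The Lipschitz estimate then gives
\[
|x-a| = \bigl|\phi^{-1}(y)-\phi^{-1}(z)\bigr| \le \Lip(\phi^{-1})\,|y-z| \le b\epsilon ,
\]
so that $x\in A_{b\epsilon}$ and hence $y=\phi(x)\in\phi(A_{b\epsilon})$. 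This establishes the first inclusion.

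For the second inclusion I would repeat the argument verbatim with every inequality made strict: if $y\in(\phi(A))_\epsilon^\circ$, then $|y-z|<\epsilon$ for some $z=\phi(a)\in\phi(A)$, so $|x-a|\le b\,|y-z|<b\epsilon$ with $x=\phi^{-1}(y)$, whence $x\in A_{b\epsilon}^\circ$ and $y\in\phi(A_{b\epsilon}^\circ)$.

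There is essentially no obstacle here; the only points requiring care are that it is $\phi^{-1}$, not $\phi$, that carries the Lipschitz bound (so one must pull $y$ back rather than push $a$ forward), and that the strict versus non-strict inequalities must be tracked consistently in the open case. The diffeomorphism hypothesis enters only through the fact that $\phi^{-1}$ is defined on all of $\reali^n$, so that $x=\phi^{-1}(y)$ makes sense for every $y$, and that $\phi(A_{b\epsilon})$ and $\phi(A_{b\epsilon}^\circ)$ are the images appearing on the right-hand sides.
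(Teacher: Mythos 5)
Your proof is correct and follows essentially the same route as the paper's: both apply the Lipschitz bound on $\phi^{-1}$ to a point of $(\phi(A))_\epsilon$ and a nearby point of $\phi(A)$, the only cosmetic difference being that the paper first restates the inclusion as $\phi^{-1}(A_\epsilon)\subseteq[\phi^{-1}(A)]_{b\epsilon}$ before running the same computation. Your explicit treatment of the open case (which the paper dismisses as ``entirely similar'') is fine.
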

\begin{proof}
We prove only the first inclusion, for the second one is entirely similar.
Fix some $A\subseteq\reali^n$. Since $\phi$ is a diffeomorphism, it is enough to show that
\[
\phi^{-1}(A_{\epsilon})\subseteq \left[\phi^{-1}(A)\right]_{b\epsilon}.
\]
Indeed, for any $x\in A_\epsilon$, there exists $y\in A$ such that $|x-y|\leq \epsilon$. By the Lipschitz condition:
\[
|\phi^{-1}(x)-\phi^{-1}(y)|\leq b|x-y|\leq b \epsilon.
\]
Hence, for any $x\in A_\epsilon$, we have $\phi^{-1}(x)\in \left(\phi^{-1}(A)\right)_{b \epsilon}$.
\end{proof}

\begin{proposition}
\label{prop:mainbound}
Let $\theta$ be an absolutely continuous probability measure with a continuous density $\rho$.
Let $A\subset\reali^n$ be a compact set with the interior ball property of radius $r$. 
Let $\phi_1,\phi_2$ be diffeomorphisms such that
\[
\Lip(\phi_1),\quad \Lip(\phi_1^{-1}),\quad \Lip(\phi_2),\quad \Lip(\phi_2^{-1})
\]
are bounded from above by a positive constant $b$.
Then,
\[
\modulo{\theta\circ \phi_1(A)-\theta\circ \phi_2(A)}\leq  
M\,\frac{n \alpha_n (\diam A)^n}{2^{n-1} r}\,b^{n+1}\cdot \sup_{x\in A}\modulo{\phi_1(x)-\phi_2(x)},
\]
where $M = \max\{\rho(x)\;\colon\; x\in \phi_1(A)\,\triangle\, \phi_2(A)\}$.
\end{proposition}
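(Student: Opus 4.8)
The plan is to estimate the symmetric difference $\phi_1(A)\,\triangle\,\phi_2(A)$ and integrate $\rho$ over it. First I would write
\[
\modulo{\theta\circ\phi_1^{-1}(A)-\theta\circ\phi_2^{-1}(A)}
=\modulo{\theta\bigl(\phi_1(A)\bigr)-\theta\bigl(\phi_2(A)\bigr)}
\leq \theta\bigl(\phi_1(A)\,\triangle\,\phi_2(A)\bigr)
\leq M\,\lambda\bigl(\phi_1(A)\,\triangle\,\phi_2(A)\bigr),
\]
using that $\rho\leq M$ on the symmetric difference. So everything reduces to a Lebesgue-measure bound on $\phi_1(A)\,\triangle\,\phi_2(A)$.

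The key geometric observation is that if $\delta=\sup_{x\in A}\modulo{\phi_1(x)-\phi_2(x)}$, then $\phi_1(A)\,\triangle\,\phi_2(A)$ is contained in a thin shell around $\partial\bigl(\phi_1(A)\bigr)$, roughly of thickness $b\delta$. Indeed, if $y\in\phi_1(A)\setminus\phi_2(A)$, write $y=\phi_1(x)$ with $x\in A$; then $\modulo{y-\phi_2(x)}\leq\delta$, and since $y\notin\phi_2(A)$ while $\phi_2(x)\in\phi_2(A)$, the segment joining them crosses $\partial\bigl(\phi_2(A)\bigr)$, so $y$ lies within distance $\delta$ of $\partial\bigl(\phi_2(A)\bigr)$; one then transfers this to a neighbourhood of $\partial\bigl(\phi_1(A)\bigr)$ by a further $b\delta$-type estimate, or argues symmetrically. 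Thus $\phi_1(A)\,\triangle\,\phi_2(A)\subseteq\bigl(\partial(\phi_1(A))\bigr)_{c\delta}$ for a suitable multiple $c\delta$ with $c$ a power of $b$. To convert a tubular-neighbourhood volume into a surface-measure bound, I would use that $\phi_1(A)$ still has an interior ball property — by Lemma~\ref{lem:preballs} (and the analogous outward statement) the radius degrades by at most a factor $b$, so $\phi_1(A)$ has the interior ball property of radius $r/b$, whence by~\eqref{eq:boundarybound}
\[
\sigma\bigl(\partial(\phi_1(A))\bigr)\leq \frac{n\alpha_n(\diam\phi_1(A))^n}{2^n\,(r/b)}
\leq \frac{n\alpha_n\,b^{n}(\diam A)^n}{2^n\,r/b}=\frac{n\alpha_n\,b^{n+1}(\diam A)^n}{2^n r}.
\]
For a set with interior ball property of radius $\rho_0$, the $t$-neighbourhood of its boundary has Lebesgue measure at most (something like) $2t\,\sigma(\partial A)$ for $t$ small relative to $\rho_0$, by a standard coarea/Steiner-type estimate; feeding in $t=c\delta$ and the surface bound above produces $\lambda\bigl(\phi_1(A)\,\triangle\,\phi_2(A)\bigr)\leq M\cdot\frac{n\alpha_n(\diam A)^n}{2^{n-1}r}b^{n+1}\delta$ after absorbing constants, which is exactly the claimed inequality.

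The main obstacle I expect is the shell-containment step: carefully showing that $\phi_1(A)\,\triangle\,\phi_2(A)$ sits inside a boundary neighbourhood of a single fixed set (with the right dependence on $b$ and $\delta$), and that the relevant neighbourhood radius is genuinely $O(b\delta)$ rather than something worse. The diffeomorphism images $\phi_i(A)$ are not literally translates of one another, so one must use the Lipschitz bounds on both $\phi_i$ and $\phi_i^{-1}$ to move points between the two images and their boundaries; this is where the factor $b^{n+1}$ (one power of $b$ from each of: diameter distortion $b^n$, radius degradation, and the displacement transfer) is accumulated. The volume-of-tube estimate and the arithmetic of constants are routine once the geometric inclusion is pinned down.
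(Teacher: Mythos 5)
Your overall reduction is the same as the paper's: bound the difference by $M\,\lambda\bigl(\phi_1(A)\,\triangle\,\phi_2(A)\bigr)$ and then control the Lebesgue measure of the symmetric difference by a boundary-area-times-thickness estimate. The genuine soft spot is the tube-volume step. You invoke a two-sided Steiner-type bound, ``the $t$-neighbourhood of $\partial E$ has measure at most $2t\,\sigma(\partial E)$,'' for a set $E$ with the interior ball property. The outer half of that is fine: it is the Reynolds transport identity $\lambda(E_t)-\lambda(E)=\int_0^t\sigma(\partial E_s)\,\d s$ combined with \eqref{eq:boundarybound} applied to $E_s$. But the inner half --- the part of the tube lying inside $E$ --- is not something you can cite for a set that is merely a union of balls, and your preferred containment, which routes all of $\phi_1(A)\,\triangle\,\phi_2(A)$ into a neighbourhood of the single boundary $\partial\bigl(\phi_1(A)\bigr)$, forces you to use it. The fix is the symmetric variant you mention only in passing: with $l=d_H\bigl(\phi_1(A),\phi_2(A)\bigr)\le\sup_{x\in A}\modulo{\phi_1(x)-\phi_2(x)}$ one has the two \emph{one-sided} containments $\phi_2(A)\setminus\phi_1(A)\subseteq[\phi_1(A)]_l\setminus\phi_1(A)$ and $\phi_1(A)\setminus\phi_2(A)\subseteq[\phi_2(A)]_l\setminus\phi_2(A)$, each an outer shell of its own set, so only the outer (Reynolds) estimate is ever needed. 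This is what the paper does.

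A second, smaller issue is how you obtain a perimeter bound for $\phi_1(A)$. Lemma~\ref{lem:preballs} is a statement about neighbourhoods, not about preservation of the interior ball property; the claim that $\phi_1(A)$ has the interior ball property of radius $r/b$ is true (a ball of radius $r$ inside $A$ maps onto a set containing a ball of radius $r/b$ about the image of its centre, since $\Lip(\phi_1^{-1})\le b$ and $\phi_1$ is onto), but it needs its own short argument and does not follow from the lemma as cited. The paper sidesteps this entirely by pulling the outer shell back through $\phi_1$: Lemma~\ref{lem:preballs} gives $[\phi_1(A)]_l\subseteq\phi_1(A_{lb})$, hence $\lambda\bigl([\phi_1(A)]_l\setminus\phi_1(A)\bigr)\le b^n\lambda(A_{lb}\setminus A)=b^n\int_0^{lb}\sigma(\partial A_t)\,\d t$, and \eqref{eq:boundarybound} is then applied to the original set $A$ (whose $t$-neighbourhood has the property with radius $r+t\ge r$). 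This yields the factor $b^{n+1}$ and, after adding the two halves, the $2^{n-1}$ in the denominator. Your route gives the same constant once the two points above are patched, but the pull-back argument is cleaner and avoids both of them.
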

\begin{proof}
First, notice that
\begin{align*}
\modulo{\theta\circ \phi_1(A)-\theta\circ \phi_2(A)}
\leq
\theta\left(\phi_1(A)\,\triangle\, \phi_2(A)\right)
\leq
M\lambda\left(\phi_1(A)\,\triangle\, \phi_2(A)\right)
\end{align*}
and
\[
\lambda\left(\phi_1(A)\,\triangle\, \phi_2(A)\right) = 
\lambda\left(\phi_1(A)\setminus \phi_2(A)\right)
+
\lambda\left(\phi_2(A)\setminus \phi_1(A)\right).
\]
Set $l = d_H\left(\phi_1(A),\phi_2(A)\right)$, where $d_H$ is the \emph{Hausdorff distance} between 
compact sets, i.e.,
\[
d_H(A,A') = \inf\left\{\epsilon>0\;\colon\;A\subseteq A'_{\epsilon}\;\;\mbox{and}\;\;A'\subseteq A_{\epsilon}\right\}.
\]
We get
\[
\phi_2(A)\subseteq [\phi_1(A)]_l
\quad\mbox{and}\quad \phi_1(A)\subseteq [\phi_2(A)]_l.
\]
Lemma~\ref{lem:preballs} yields
\[
[\phi_1(A)]_l\subseteq \phi_1(A_{lb}),
\]
so that
\[
\lambda\left(\phi_2(A)\setminus \phi_1(A)\right)
\leq 
\lambda\left(\phi_1(A_{lb})\setminus \phi_1(A)\right)
=
\lambda\left(\phi_1(A_{lb}\setminus A)\right).
\]
The Lipschitz condition for $\phi_1$ implies that
\[
\lambda\left(\phi_1(A_{lb}\setminus A)\right)\leq b^n\lambda(A_{lb}\setminus A),
\]
while the Reynolds transport theorem~\cite{LorenzReynolds} yields
\[
\lambda(A_{lb}\setminus A) = \lambda(A_{lb})- \lambda(A) = 
\int_0^{lb} \sigma(\partial A_t)\d t.
\]
Recalling~\eqref{eq:boundarybound}, we get
\[
\int_0^{lb} \sigma(\partial A_t)\d t
\leq \frac{n \alpha_n (\diam A)^n}{2^n}\int_0^{lb}\frac{\d t}{t+r}
\leq \frac{n \alpha_n (\diam A)^n}{2^n r}\,b\cdot l.
\]
Combining all the previous inequalities, we obtain the following estimate:
\[
\lambda\left(\phi_2(A)\setminus \phi_1(A)\right)
\leq 
\frac{n \alpha_n (\diam A)^n}{2^n r}\,b^{n+1}\cdot d_H(\phi_1(A),\phi_2(A)).
\]
The similar estimate holds for $\lambda\left(\phi_1(A)\setminus \phi_2(A)\right)$.
In order to complete the proof, it remains to observe that
\[
d_H\left(\phi_1(A),\phi_2(A)\right)\leq \sup_{x\in A}\modulo{\phi_1(x)-\phi_2(x)}.
\]
\end{proof}

\subsection{Directional derivative with respect to a vector field}


The basic idea employed here is to determine directions in $\mathcal{P}(\reali^n)$ by $\C1$ vector fields.
Hence the usual concept of directional derivative will be modified as follows.

\begin{definition}
\label{def:dirderiv}
We say that a map $f\colon \mathcal{P}(\reali^n)\to\reali$ is \emph{directionally differentiable with respect to a vector field} 
$w\in \C1(\reali^n;\reali^n)$ at a point $\theta\in \mathcal{P}(\reali^n)$ if there exists the limit
\[
\partial_w f(\theta) = \lim_{\epsilon\to 0+} \frac{f\left(\theta\circ W_{\epsilon}^0\right)-f(\theta)}{\epsilon},
\]
where $W$ is the flow of $w$.
In this case $\partial_w f(\theta)$ is called the \emph{directional derivative} of $f$ with respect to $w$ at $\theta$.
\end{definition}

\begin{proposition}
\label{prop:dirderive}
Let $A\subset\reali^n$ be a compact set with the interior ball property of radius $r$. 
The map $f\colon \mathcal{P}(\reali^n)\to\reali$ defined by
\begin{equation}
\label{eq:f}
f(\theta) = \theta\circ V_{T}^{\tau}(A),\qquad \theta\in \mathcal{P}(\reali^n),
\end{equation}
is directionally differentiable with respect to any vector field $w$ at every point $\rho\lambda\in \mathcal{P}(\reali^n)$,
with $\rho\in \C1(\reali^n;\reali)$, and its directional derivative is given by
\[
\partial_w f(\rho\lambda) = -\int_{V_T^{\tau}(\partial A)} 
\left(w(x) \cdot n_A(x)\right)\rho(x)
\d {\sigma(x)},
\]
where $n_A(x)$ is the measure theoretic outer unit normal to $A$ at $x$.
\end{proposition}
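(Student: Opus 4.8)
The plan is to rewrite $f(\eta\circ W_\epsilon^0)$ as an integral over the compact set $B:=V_T^\tau(A)$ via the change of variables formula, to differentiate under the integral sign at $\epsilon=0$, and then to apply the Gauss--Green theorem on $B$. Write $\eta=\rho\lambda$. Since $V_T^\tau$ is a diffeomorphism we have $f(\eta)=\eta(B)$, and since for small $\epsilon$ the map $W_\epsilon^0$ is a $\C1$ diffeomorphism of a neighbourhood of $B$ with inverse $W_0^\epsilon$, unwinding the definition of the pushforward gives
\[
f\big(\eta\circ W_\epsilon^0\big)=\eta\big(W_\epsilon^0(B)\big)=\int_{\reali^n}\caratt{B}\big(W_0^\epsilon(x)\big)\,\rho(x)\d{\lambda(x)}.
\]
The substitution $x=W_\epsilon^0(z)$ then yields
\[
f\big(\eta\circ W_\epsilon^0\big)=\int_B \rho\big(W_\epsilon^0(z)\big)\,\det DW_\epsilon^0(z)\d{\lambda(z)},
\]
where $\det DW_\epsilon^0$ depends continuously on $\epsilon$ and equals $1$ at $\epsilon=0$, hence is positive on $B$ for small $\epsilon$.

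Next I would differentiate the integrand at $\epsilon=0$. As $W$ is the flow of $w\in\C1$, one has $\partial_\epsilon W_\epsilon^0(z)\big|_{\epsilon=0}=-w(z)$ and, by the Euler (Liouville) expansion formula, $\partial_\epsilon\det DW_\epsilon^0(z)\big|_{\epsilon=0}=-\div w(z)$; hence
\[
\partial_\epsilon\Big[\rho\big(W_\epsilon^0(z)\big)\det DW_\epsilon^0(z)\Big]\Big|_{\epsilon=0}=-\nabla\rho(z)\cdot w(z)-\rho(z)\div w(z)=-\div(\rho w)(z).
\]
Since $\rho\in\C1$ and, on a fixed compact set containing $W_\epsilon^0(B)$ for all $\epsilon\in[0,\epsilon_0]$, the maps $(\epsilon,z)\mapsto W_\epsilon^0(z)$, $(\epsilon,z)\mapsto DW_\epsilon^0(z)$ and their $\epsilon$-derivatives are bounded, the difference quotients $\epsilon^{-1}\big[\rho(W_\epsilon^0(z))\det DW_\epsilon^0(z)-\rho(z)\big]$ are uniformly bounded on $B$ and converge pointwise to $-\div(\rho w)(z)$. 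Dominated convergence then gives
\[
\partial_w f(\eta)=\lim_{\epsilon\to0+}\frac{f(\eta\circ W_\epsilon^0)-f(\eta)}{\epsilon}=-\int_B\div(\rho w)\d{\lambda}.
\]

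It remains to apply the Gauss--Green theorem on $B$. Here I would use that $V_T^\tau$ is bi-Lipschitz, with $\Lip(V_T^\tau),\Lip(V_\tau^T)\le e^{L|T-\tau|}$: then $\partial B=V_T^\tau(\partial A)$ is $(n-1)$-rectifiable, $\sigma(\partial B)\le e^{(n-1)L|T-\tau|}\sigma(\partial A)<\infty$ by~\eqref{eq:boundarybound}, and $B$ inherits from $A$, through the diffeomorphism $V_T^\tau$, the regularity recorded in the previous subsection, so that the measure theoretic outer unit normal $n_B$ is defined for $\sigma$-almost every point of $\partial B$ and
\[
\int_B\div F\d{\lambda}=\int_{\partial B}F\cdot n_B\d{\sigma}\qquad\mbox{for every}\;\; F\in\C1(\reali^n;\reali^n).
\]
Taking $F=\rho w$ and recalling that $\partial B=V_T^\tau(\partial A)$ and $n_B=n_{V_T^\tau(A)}$, we arrive at
\[
\partial_w f(\rho\lambda)=-\int_{V_T^\tau(\partial A)}\big(w(x)\cdot n_{V_T^\tau(A)}(x)\big)\,\rho(x)\d{\sigma(x)},
\]
which is the asserted identity (the relevant normal being the outer unit normal to $V_T^\tau(A)$, in accordance with Theorem~\ref{thm:PMP}).

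The differentiation under the integral is routine. The step that needs care is the last one: the set $B=V_T^\tau(A)$ is no longer a union of balls of the original radius, so one must justify separately that the Gauss--Green formula still holds for it with the topological boundary and the Hausdorff surface measure $\sigma$. This is the only place where the interior ball property of $A$ really enters --- either one checks that bi-Lipschitz images of such sets remain sets of finite perimeter whose reduced boundary exhausts the topological boundary up to a $\sigma$-null set, or one shows directly that the interior ball property transfers to $B$, with a smaller radius, using that $V_T^\tau$ is a $\C2$ diffeomorphism under \textbf{(A1)}--\textbf{(A2)}. Alternatively, one can avoid $B$ altogether by changing variables at the very start: $f(\eta\circ W_\epsilon^0)=\tilde\eta\big(\tilde W_\epsilon^0(A)\big)$, where $\tilde\eta$ is the pullback of $\rho\lambda$ by $V_T^\tau$ and $\tilde W$ is the flow of the pulled-back field $(DV_T^\tau)^{-1}\big(w\circ V_T^\tau\big)$; everything then stays on $A$, at the price of a single change of variables for surface integrals in the final line.
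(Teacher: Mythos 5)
Your argument is correct and follows essentially the same route as the paper: the paper obtains $\partial_w f(\rho\lambda)=-\int_{V_T^\tau(A)}\div(\rho w)\d\lambda$ in one line by citing the Reynolds transport theorem and then applies Gauss--Green, whereas you derive that same identity by hand (change of variables plus Liouville's formula for $\partial_\epsilon\det DW_\epsilon^0$), which is just the standard proof of the transport theorem. Your closing discussion of why Gauss--Green is legitimate on $B=V_T^\tau(A)$ (and the observation that the normal in the statement should be read as $n_{V_T^\tau(A)}$) addresses a point the paper passes over silently, and is a welcome addition rather than a deviation.
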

\begin{proof}
It follows from the Reynolds transport theorem that
\[
\partial_v f(\rho\lambda) = \frac{d}{d \epsilon}\int_{W^0_{\epsilon}\left(V_T^{\tau}(A)\right)}\rho(x)\d x\big|_{\epsilon=0}
= -\int_{V_T^{\tau}(A)} \div(\rho w)\d x.
\]
Now, by applying the Gauss-Green theorem, we complete the proof.
\end{proof}

\subsection{Proof of Theorem~\ref{thm:PMP}}

Let $\bar u$ be an optimal control and $\bar \mu$ be the corresponding trajectory. 
Given $\tau\in\;]0,T]$ and $\omega\in U$,
we define the perturbed control $u_{\epsilon}$ by the formula
\[
u_{\epsilon}(t) = 
\begin{cases}
\omega & \mbox{if}\;\; t\in [\tau- \epsilon, \tau],\\
\bar u(t) &\mbox{otherwise},
\end{cases}
\]
and denote the corresponding trajectory by $\mu_{\epsilon}$.

Let $\hat V$ and $\bar V$ be the flows of the vector fields 
$(t,x)\mapsto v\left(t,x,\omega\right)$ and $(t,x)\mapsto v\left(t,x,\bar u(t)\right)$.
Notice that
\begin{equation}
\label{eq:mueps}
\mu_{\epsilon}(\tau) = \bar\mu(\tau- \epsilon)\circ \hat V^{\tau- \epsilon}_{\tau}
= 
\bar\mu(\tau)\circ \bar V_{\tau- \epsilon}^{\tau}\circ \hat V^{\tau- \epsilon}_{\tau}.
\end{equation}
Finally, let $W$ denote the flow of the 
vector field
\begin{equation}
\label{eq:w}
w(x) = v(\tau,x,\omega) - v\left(\tau,x,\bar u(\tau)\right).
\end{equation}

\begin{lemma}
\label{lem:needle}
Under the assumptions of Theorem~\ref{thm:PMP}, we have
\[
\lim_{\epsilon\to 0+}\frac{f\left(\mu_{\epsilon}(\tau)\right) - f\left(\bar\mu(\tau)\circ W_{\epsilon}^0\right)}{\epsilon}=0
\qquad\mbox{for a.e.}\;\; \tau\in[0,T],
\]
where $f$ is defined by~\eqref{eq:f}.
\end{lemma}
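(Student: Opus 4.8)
The plan is to estimate the difference $f(\mu_\epsilon(\tau)) - f(\bar\mu(\tau)\circ W_\epsilon^0)$ by comparing the two relevant pushforward maps through Proposition~\ref{prop:mainbound}, so that the whole question reduces to a first-order Taylor expansion in $\epsilon$ of a composition of flows. By~\eqref{eq:mueps} we have $\mu_\epsilon(\tau) = \bar\mu(\tau)\circ\bar V_{\tau-\epsilon}^\tau\circ\hat V^{\tau-\epsilon}_\tau$, while $\bar\mu(\tau)\circ W_\epsilon^0$ is the pushforward of $\bar\mu(\tau)$ by $W_\epsilon^0$. Since $\bar\mu(\tau)=\rho_0\lambda\circ\bar V_0^\tau$ is absolutely continuous with a $\C1$ density $\bar\rho(\tau,\cdot)$ (by the representation formula and Assumption~\textbf{(A2)}), and since $\bar V_T^\tau$ is a diffeomorphism with Lipschitz constants controlled uniformly on $[0,T]$, I would apply Proposition~\ref{prop:mainbound} with $\theta=\bar\mu(\tau)$, the fixed compact interior-ball set $A$, and the two diffeomorphisms $\phi_1 = \bar V_T^\tau\circ\bar V_{\tau-\epsilon}^\tau\circ\hat V^{\tau-\epsilon}_\tau$ and $\phi_2 = \bar V_T^\tau\circ W_\epsilon^0$. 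This yields
\[
\bigl|f(\mu_\epsilon(\tau)) - f(\bar\mu(\tau)\circ W_\epsilon^0)\bigr| \le \text{const}\cdot \sup_{x\in A}\bigl|\phi_1(x)-\phi_2(x)\bigr|,
\]
with a constant that stays bounded as $\epsilon\to0$ because all the Lipschitz constants of $\phi_1,\phi_2,\phi_1^{-1},\phi_2^{-1}$ are uniformly bounded (they are built from $\bar V$, $\hat V$, $W$, whose flows satisfy the estimates listed in Subsection~2.2 with the common constants $L,C$). So it suffices to show $\sup_{x\in A}|\phi_1(x)-\phi_2(x)| = o(\epsilon)$ as $\epsilon\to0+$, for a.e.\ $\tau$.

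Next I would reduce this to a statement about the flows at the level of a fixed point $x$. Since $\bar V_T^\tau$ is bi-Lipschitz, it is equivalent to prove $\sup_{y\in\bar V_\tau^T(A)}|\,\bar V_{\tau-\epsilon}^\tau\circ\hat V^{\tau-\epsilon}_\tau(y) - W_\epsilon^0(y)\,| = o(\epsilon)$, and because the set $\bar V_\tau^T(A)$ is bounded, it is enough to get this pointwise with a modulus uniform on bounded sets. Here is where the a.e.\ $\tau$ enters: expand each of the three flows to first order in $\epsilon$. For $\hat V^{\tau-\epsilon}_\tau(y)$ one has $y - \epsilon\, v(\tau,y,\omega) + o(\epsilon)$ provided $\tau$ is a Lebesgue point of $t\mapsto v(t,\cdot,\cdot)$ in the appropriate sense — but under \textbf{(A1)}, $v$ is continuous in $t$, so this expansion holds for every $\tau$ with no exceptional set. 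Similarly $\bar V_{\tau-\epsilon}^\tau(z) = z + \epsilon\, v(\tau,z,\bar u(\tau)) + o(\epsilon)$ at every $\tau$ that is a Lebesgue point of $t\mapsto v(t,z,\bar u(t))$; since $\bar u$ is merely measurable, this is where ``a.e.\ $\tau$'' is genuinely needed, and one should choose $\tau$ to be a common Lebesgue point, which is a full-measure set by a standard separability argument (Lebesgue points of a countable dense family of the relevant integrands). Finally $W_\epsilon^0(y) = y + \epsilon\, w(y) + o(\epsilon)$ with $w$ as in~\eqref{eq:w}. Composing the first two expansions, the $\epsilon$-terms combine to $v(\tau,y,\omega) - v(\tau,y,\bar u(\tau)) = w(y)$ (the arguments agree to leading order, and the $\C1$-dependence on $x$ from \textbf{(A2)} controls the cross terms), which exactly cancels the $\epsilon\,w(y)$ from the $W$-expansion. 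Hence $\phi_1(x)-\phi_2(x) = o(\epsilon)$ uniformly on $A$, and the lemma follows.

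The main obstacle, I expect, is making the $o(\epsilon)$ estimates \emph{uniform} — uniform over $x\in A$ and with a single exceptional $\tau$-set. The pointwise Taylor expansion of a flow is routine, but here one is composing three flows whose generating fields are only measurable in $t$, and one needs the remainders to be $o(\epsilon)$ with a modulus independent of the spatial point. The clean way is to work with the integral form $\hat V^{\tau-\epsilon}_\tau(y) = y - \int_{\tau-\epsilon}^\tau v(s,\hat V^s_\tau(y),\omega)\,ds$, use the uniform Lipschitz-in-$x$ bound and the uniform bound $|\hat V^s_\tau(y)-y| \le \text{const}\cdot\epsilon$ on $[\tau-\epsilon,\tau]$ to replace the integrand by $v(s,y,\omega)$ up to an $O(\epsilon^2)$ error, and then write $\int_{\tau-\epsilon}^\tau v(s,y,\omega)\,ds = \epsilon\, v(\tau,y,\omega) + o(\epsilon)$ using continuity in $t$ (for $\hat V$, $W$) and the Lebesgue-point property (for $\bar V$). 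Because the field $v(s,y,\bar u(s))$ for $y$ ranging over the \emph{compact} set $\bar V_\tau^T(A)$ can be handled through a countable dense subset together with the uniform modulus of continuity in $x$, the single full-measure set of good $\tau$ can be extracted, and everything goes through.
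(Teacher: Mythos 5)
Your proposal is correct and follows essentially the same route as the paper: reduce via the identity~\eqref{eq:mueps} and Proposition~\ref{prop:mainbound} to showing that the two composed flow maps agree to order $o(\epsilon)$ uniformly on the compact set, then establish this by first-order integral expansions of the flows in which the $\epsilon\,w$ terms cancel, with the exceptional $\tau$-set coming from the Lebesgue differentiation theorem applied to the (merely measurable in $t$) field $v(t,\cdot,\bar u(t))$. The only differences from the paper's proof are cosmetic (the paper compares $\hat V^{\tau-\epsilon}_{\tau}$ with $\bar V^{\tau-\epsilon}_{\tau}\circ W^0_{\epsilon}$ via a single two-trajectory ODE estimate rather than expanding each flow separately), so no further comment is needed.
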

\begin{proof}
\textbf{1.} In view of~\eqref{eq:mueps}, we have to verify that
\begin{equation}
\label{eq:toproof}
\hspace{-7pt}
\lim_{\epsilon\to 0+}
\frac{
	\bar\mu(\tau)\circ \bar V_{\tau- \epsilon}^{\tau}\circ \hat V^{\tau- \epsilon}_{\tau}(A)
	-
	\bar\mu(\tau)\circ W^0_{\epsilon}(A)
}{\epsilon}
=0
\quad\mbox{for a.e. }\; \tau\in[0,T].
\end{equation}

\textbf{2.} It follows from~\eqref{eq:w} that the map $W_{\epsilon}^0$ is Lipschitz continuous. 
Now, by Proposition~\ref{prop:mainbound} and by the Lipschitz property of $\bar V_{\tau- \epsilon}^{\tau}$, we obtain
\begin{align*}
\Big|
\bar\mu(\tau)\circ \bar V_{\tau- \epsilon}^{\tau}\circ \hat V^{\tau- \epsilon}_{\tau}(A)
-
\bar\mu(\tau)\circ W^0_{\epsilon}(A)
\Big|
\leq 
M
\sup_{x\in A}
\modulo{
\bar V_{\tau- \epsilon}^{\tau}\circ \hat V^{\tau- \epsilon}_{\tau}(x)
-
W^0_{\epsilon}(x)
}
\\
\leq
Me^{LT}
\sup_{x\in A}
\modulo{
\hat V^{\tau- \epsilon}_{\tau}(x)
-
\bar V^{\tau- \epsilon}_{\tau}\circ W^0_{\epsilon}(x)
}
\end{align*}
for some positive $M$.

\textbf{3.} For each $x\in \reali^n$, we have
\begin{align}
&W^0_{\epsilon}(x) - x
= 
-\int_{0}^{\epsilon} w\left(W^0_s(x)\right)\d s
\notag
\\
&\;=
-w\left(x\right) \epsilon
- \int_{0}^{\epsilon} \left[w\left(W_s^0(x)\right)-w\left(x\right)\right]\d s
\notag
\\
\label{eq:first}
&\;=
-w\left(x\right) \epsilon
- \int_{0}^{\epsilon}\int_0^1 Dw\left(\alpha W_s^0(x) +(1-\alpha)x\right)\d\alpha\cdot \left(W_s^0(x)-x\right)\d s.
\end{align}
On the other hand, it follows from \textbf{(A1)} that
\begin{equation}
\label{eq:firstestim}
\modulo{W_{s}^0(x) - x} \leq \int_0^{s} \modulo{w\left(W_{t}^0(x)\right)}\d t \leq\beta(x)\, \epsilon
\end{equation}
for all $s\in [0,\epsilon]$  and $x\in \reali^n$, where
\[
\beta(x) = 2C\left(1+e^{2CT}\left(1+|x|\right)\right).
\]

\textbf{4.} Fix some $x,y\in \reali^n$. Let $\chi_{\epsilon}=\chi_{\epsilon}(t)$ and $\bar\chi=\bar\chi(t)$ denote the solutions to the Cauchy problems
\[
\begin{cases}
\dot z = v\left(t,z,u_{\epsilon}(t)\right),\\
z(\tau) = x
\end{cases}
\qquad\mbox{and}\qquad
\begin{cases}
\dot z = v\left(t,z,\bar u(t)\right),\\
z(\tau) = y,
\end{cases}
\]
correspondingly. In particular, we have
\[
\hat V^{s}_{\tau}(x) = \chi_{\epsilon}(s),
\qquad
\bar V^{s}_{\tau}(y) = \bar\chi(s),
\quad\mbox{for all }\; s\in [\tau-\epsilon,\tau].
\]
Notice that
\begin{align*}
[\bar\chi(\tau &- \epsilon) - y] - \left[\chi_{\epsilon}(\tau - \epsilon) - x\right]
\\
&= 
\int_{\tau- \epsilon}^{\tau} 
\left[
v\left(s,\chi_{\epsilon}(s),\omega\right)
-
v\left(s,\bar\chi(s),\bar u(s)\right)
\right]
\d s
\\
&=
\int_{\tau- \epsilon}^{\tau} 
\left[
v\left(s,\chi_{\epsilon}(s),\omega\right)
-
v\left(s,\bar\chi(s),\omega\right)
\right]
\d s
\\
&\;\;\;+
\int_{\tau- \epsilon}^{\tau} 
\left[
v\left(s,\bar\chi(s),\omega\right)
-
v\left(s,\bar\chi(s),\bar u(s)\right)
\right]
\d s
\\ 
&=
\int_{\tau- \epsilon}^{\tau} 
\int_0^1 D_xv\left(s,\alpha\chi_{\epsilon}(s)+(1-\alpha)\bar\chi(s),\omega\right)\d\alpha\cdot
\left(\chi_{\epsilon}(s) - \bar\chi(s) \right)
\d s
\\
&\;\;\;+
\int_{\tau- \epsilon}^{\tau} 
\left[
v\left(s,\bar\chi(s),\omega\right)
-
v\left(s,\bar\chi(s),\bar u(s)\right)
\right]
\d s.
\end{align*}
In other words, for every $x,y\in \reali^n$, we get
\begin{multline}
\label{eq:second}
\bar V_{\tau}^{\tau- \epsilon}(y) - \hat V_{\tau}^{\tau- \epsilon}(x)
\\\hspace{-3pt}= y - x 
+
\int_{\tau- \epsilon}^{\tau} 
\left[
v\left(s,\bar V_{\tau}^{s}(x),\omega\right)
-
v\left(s,\bar V_{\tau}^{s}(x),\bar u(s)\right)
\right]
\d s
\\
\hspace{-6pt}+
\int_{\tau- \epsilon}^{\tau} 
\int_0^1 D_xv\left(s,\alpha\hat V_{\tau}^{s}(x)
+(1-\alpha)\bar V_{\tau}^{s}(y),\omega\right)
\d\alpha
\\
\cdot
\left(
\hat V_{\tau}^{s}(x) - \bar V_{\tau}^{s}(y)
\right)
\d s.
\end{multline}
Finally, it follows from \textbf{(A1)} that
\begin{align}
\hspace{-10pt}\modulo{\hat V^{s}_{\tau}(x) - \bar V^{s}_{\tau}(y)} 
&\leq |x-y|+
\int^{\tau}_s 
\left|
v\left(t,\hat V^{t}_{\tau}(x),\omega\right)
-
v\left(t,\bar V^{t}_{\tau}(y),\bar u(t)\right)
\right|
\d t
\notag
\\
\label{eq:secondestim}
&\leq 
|x-y| + \gamma(x,y)\, \epsilon
\end{align}
for all $s\in [\tau- \epsilon,\tau]$ and $x,y\in \reali^n$, where
\[
\gamma(x,y) = C\left(2 + e^{CT}\left(2+|x|+|y|\right)\right).
\]

\textbf{5.} 
Set $y = W_{\epsilon}^0(x)$. Now we combine~\eqref{eq:first}--\eqref{eq:secondestim}, and then recall~\eqref{eq:w} together with \textbf{(A1)} in order to estimate the derivatives of the vector fields. In this way we find that
\begin{multline*}
\modulo{
\hat V^{\tau- \epsilon}_{\tau}(x)
-
\bar V^{\tau- \epsilon}_{\tau}\circ W^0_{\epsilon}(x)
}\\
\leq
\int_{\tau- \epsilon}^{\tau} 
\left|
v\left(s,\bar V_{\tau}^{s}(x),\omega\right)
-
v\left(s,\bar V_{\tau}^{s}(x),\bar u(s)\right) - w(x)
\right|
\d s + N\epsilon^2
\end{multline*}
for every $x\in A$, where
\[
N = L\cdot \max_{x\in A}\left(3\beta(x) + \gamma\left(x,W_{\epsilon}^0(x)\right)\right).
\]
As a consequence, we get
\begin{multline*}
\modulo{
\bar\mu(\tau)\circ \bar V_{\tau- \epsilon}^{\tau}\circ \hat V^{\tau- \epsilon}_{\tau}(A)
-
\bar\mu(\tau)\circ W^0_{\epsilon}(A)
}
\\
\leq
Me^{LT}\cdot
\int_{\tau- \epsilon}^{\tau} 
\sup_{x\in A}\left|
v\left(s,\bar V_{\tau}^{s}(x),\omega\right)
-
v\left(s,\bar V_{\tau}^{s}(x),\bar u(s)\right) - w(x)
\right|
\d s \\+ Me^{LT}\, N \epsilon^2.
\end{multline*}
Notice that the map
\[
s\mapsto \sup_{x\in A}\left|
v\left(s,\bar V_{\tau}^{s}(x),\omega\right)
-
v\left(s,\bar V_{\tau}^{s}(x),\bar u(s)\right) - w\left(x\right)
\right|
\]
is integrable. Hence, applying the Lebesgue differentiation theorem and then looking at the definition of $w$, we conclude that~\eqref{eq:toproof}
holds for almost every $\tau\in [0,T]$. This completes the proof of Lemma~\ref{lem:needle}.
\end{proof}

Now in view of Proposition~\ref{prop:dirderive} and Lemma~\ref{lem:needle}, we see that
\begin{align*}
\lim_{\epsilon\to 0+}\frac{f\left(\mu_{\epsilon}(\tau)\right)-f\left(\bar\mu(\tau)\right)}{\epsilon}
&=
\lim_{\epsilon\to 0+}\frac{\mu_{\epsilon}(T)(A)-\bar\mu(T)(A)}{\epsilon}
\\
&=
-\int_{\bar V_T^{\tau}(\partial A)} 
\left(w(x) \cdot n_A(x)\right)\bar\rho(\tau,x)
\d {\sigma(x)}
\end{align*}
for almost every $\tau\in [0,T]$,
where $\bar\rho(\tau,\cdot)$ is the density of $\bar\mu(\tau)$. Since $\bar\mu$ is an optimal trajectory,
it follows that $\bar\mu(T)(A)\geq \mu_{\epsilon}(T)(A)$ for every $\epsilon$. Therefore,
\[
-\int_{\partial A^{\tau}} 
\left(w(x) \cdot n_A(x)\right)\bar\rho(\tau,x)
\d {\sigma(x)} \leq 0,
\]
where $A^{\tau} = \bar V_T^{\tau}(A)$.
By the definition of $w$, we obtain
\[
\int_{\partial A^{\tau}} 
\bar\rho(\tau,x)\,u\left(\tau,x,\bar u(\tau)\right) \cdot n_A(x)
\d {\sigma(x)}
\leq
\int_{\partial A^{\tau}} 
\bar\rho(\tau,x)\,u\left(\tau,x,\omega\right) \cdot n_A(x)
\d {\sigma(x)}
\]
for all $\omega\in U$ and almost every $\tau\in [0,T]$. The latter inequality is clearly equivalent to~\eqref{eq:optcon}.
The proof of Theorem~\ref{thm:PMP} is complete.

\section{Variational stability}
\label{sec:vs}

\subsection{Perturbation of the problem}

Our necessary optimality condition has two major drawbacks: the initial measure $\theta$ must be
absolutely continuous with smooth density and the target set $A$ must have the interior ball property.
In this section we show how to deal with control problems that do not satisfy the above assumptions.
In what follows, we suppose that $A$ is a compact set and $\theta$ is an arbitrary probability measure;
assumptions \textbf{(A1)} and \textbf{(A2)} are still valid.


Let $\eta = \eta(x)$ be the standard mollifier and $\eta_{\epsilon}(x) = \epsilon^{-n}\eta(x/\epsilon)$
for any positive $\epsilon$.
Define the \emph{convolution} of a probability measure $\theta$ and the map $\eta_{\epsilon}$ by
\[
\theta*\eta_{\epsilon}(x) = \int_{\reali^n}\eta_{\epsilon}(x-y)\d\theta(y).
\]
The function $\theta*\eta_{\epsilon}$ is known to be smooth~\cite{AmbrosioFuscoPallara}. By using the Fubini theorem,
we easily obtain the identity
\[
\int_{\reali^n}\theta*\eta_{\epsilon}(x)\d x = 1,
\]
so that $(\theta*\eta_{\epsilon})\lambda$ is a well defined probability measure.

Let us perturb the initial problem $(P)$ in the following way:
\begin{gather*}
\mbox{Maximize}\quad \mu(T)\left(A_{r \epsilon}\right)\\
\mbox{subject to}\quad
\begin{cases}
\mu_t + \div{}\hspace{-2pt}_x \left(v\left(t,x,u(t)\right)\mu\right)=0,\\
\mu(0) = (\theta*\eta_{\epsilon})\lambda,
\end{cases}
\quad u\in \mathcal{U},
\tag{$P_{\epsilon}$}
\end{gather*}
where $r = \max\left\{1,e^{LT}\right\}$.

\begin{theorem}
\label{thm:vs}
Let $\bar\nu_{\epsilon}$ be optimal for $(P_{\epsilon})$. 
Then every accumulation point $\bar\nu$ of the family $(\bar\nu_{\epsilon})_{\epsilon>0}$
is optimal for $(P)$ and $\max(P_{\epsilon})\to\max(P)$ as $\epsilon\to 0$.
Moreover, if $\theta$ is absolutely continuous and $\lambda(\partial A)=0$, then
\begin{equation}
\label{eq:nearlyopt}
\max(P) = \lim_{\epsilon\to 0}\mu(T,\theta,\bar\nu_{\epsilon})(A),
\end{equation}
where $t\mapsto\mu(t,\theta,\bar\nu_{\epsilon})$ denotes the trajectory corresponding to the 
initial distribution $\theta$ and the generalized control
$\bar\nu_{\epsilon}$.
\end{theorem}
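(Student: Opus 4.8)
The plan is to compare $(P_\epsilon)$ with $(P)$ by two elementary transport estimates and then pass to the limit along a subsequence. Throughout, for a generalized control $\nu$ let $\bar V_\nu$ be the flow of its averaged vector field; set $\Psi_\nu:=(\bar V_\nu)_T^{0}$, so by \eqref{eq:representation} the objective of $(P)$ at $\nu$ with initial datum $\theta$ is $J(\nu)=\theta\bigl(\Psi_\nu(A)\bigr)$ and the objective of $(P_\epsilon)$ at $\nu$ is $J_\epsilon(\nu)=(\theta*\eta_\epsilon)\lambda\bigl(\Psi_\nu(A_{r\epsilon})\bigr)$. By \textbf{(A1)} and the flow estimates, $\Psi_\nu$ and $\Psi_\nu^{-1}$ are Lipschitz with constant at most $e^{LT}\le r$. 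Two facts will be used repeatedly: (i) since $\eta_\epsilon$ is supported in the closed $\epsilon$-ball, $(\theta*\eta_\epsilon)\lambda(E_\epsilon)\ge\theta(E)$ and $(\theta*\eta_\epsilon)\lambda(E_s)\le\theta(E_{s+\epsilon})$ for every Borel $E$ and every $s\ge0$; (ii) by Lemma~\ref{lem:preballs}, $\bigl(\Psi_\nu(A)\bigr)_\epsilon\subseteq\Psi_\nu(A_{r\epsilon})$, while the Lipschitz bound on $\Psi_\nu$ gives $\Psi_\nu(A_{r\epsilon})\subseteq\bigl(\Psi_\nu(A)\bigr)_{re^{LT}\epsilon}$. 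Combining (i) and (ii), for every $\nu$,
\[
J(\nu)\ \le\ J_\epsilon(\nu)\ \le\ \theta\Bigl(\bigl(\Psi_\nu(A)\bigr)_{c\epsilon}\Bigr),\qquad c:=re^{LT}+1 .
\]
Taking suprema over $\nu$ in the left inequality (both suprema are attained, by Theorem~\ref{thm:exitence}, since $A$ and $A_{r\epsilon}$ are closed) yields $\max(P_\epsilon)\ge\max(P)$.

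For the matching upper bound, let $\bar\nu$ be any accumulation point of $(\bar\nu_\epsilon)_{\epsilon>0}$; by Proposition~\ref{prop:compact} there is $\epsilon_k\downarrow0$ with $\bar\nu_{\epsilon_k}\to\bar\nu$, and we let $\bar\mu$ be the trajectory of $\bar\nu$ with datum $\theta$. As in the proof of Theorem~\ref{thm:exitence}, the averaged vector fields converge, pointwise in $x$, weakly in $\L1$; hence, by Lemma~\ref{lem:weakmu}, the trajectory of $(P)$ with datum $\theta$ and control $\bar\nu_{\epsilon_k}$ converges at time $T$ narrowly to $\bar\mu(T)$. The trajectory $\mu_k$ of $(P_{\epsilon_k})$ differs only in its initial datum $(\theta*\eta_{\epsilon_k})\lambda$, transported by the same $e^{LT}$-Lipschitz map; since pushforward by an $e^{LT}$-Lipschitz map is $e^{LT}$-Lipschitz for the Prohorov distance and $(\theta*\eta_{\epsilon_k})\lambda\to\theta$ narrowly, we get $d_P\bigl(\mu_k(T),\bar\mu(T)\bigr)\to0$. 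Now fix $\delta>0$: for $k$ large $A_{r\epsilon_k}\subseteq A_\delta$, so $\max(P_{\epsilon_k})=\mu_k(T)(A_{r\epsilon_k})\le\mu_k(T)(A_\delta)$, whence $\limsup_k\max(P_{\epsilon_k})\le\bar\mu(T)(A_\delta)$ because $A_\delta$ is closed. Letting $\delta\downarrow0$, using $\bigcap_{\delta>0}A_\delta=A$ and continuity from above of the finite measure $\bar\mu(T)$, gives $\limsup_k\max(P_{\epsilon_k})\le\bar\mu(T)(A)=J(\bar\nu)\le\max(P)$. Together with $\max(P_{\epsilon_k})\ge\max(P)$ from the first paragraph, all these terms coincide: $\bar\nu$ is optimal for $(P)$ and $\max(P_{\epsilon_k})\to\max(P)$. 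Since every sequence $\epsilon_j\downarrow0$ has, by Proposition~\ref{prop:compact}, a subsequence along which $\bar\nu_{\epsilon_j}$ converges — hence along which $\max(P_{\epsilon_j})\to\max(P)$ — we conclude $\max(P_\epsilon)\to\max(P)$ and that every accumulation point of $(\bar\nu_\epsilon)$ is optimal for $(P)$.

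It remains to prove \eqref{eq:nearlyopt} when $\theta=\rho\lambda$ is absolutely continuous and $\lambda(\partial A)=0$. Write $g_\epsilon:=\Psi_{\bar\nu_\epsilon}$ and $B_\epsilon:=g_\epsilon(A)$, so $\mu(T,\theta,\bar\nu_\epsilon)(A)=\theta(B_\epsilon)=J(\bar\nu_\epsilon)\le\max(P)$; we must show $\liminf_{\epsilon\to0}\theta(B_\epsilon)\ge\max(P)$. The right inequality of the first display, applied to $\nu=\bar\nu_\epsilon$, gives $\max(P_\epsilon)=J_\epsilon(\bar\nu_\epsilon)\le\theta\bigl((B_\epsilon)_{c\epsilon}\bigr)$, hence
\[
\theta(B_\epsilon)\ \ge\ \max(P_\epsilon)-\theta\bigl((B_\epsilon)_{c\epsilon}\setminus B_\epsilon\bigr).
\]
Since $B_\epsilon$ is compact, $(B_\epsilon)_{c\epsilon}\setminus B_\epsilon\subseteq(\partial B_\epsilon)_{c\epsilon}$; and since $g_\epsilon$ is a diffeomorphism, $\partial B_\epsilon=g_\epsilon(\partial A)$, so by Lemma~\ref{lem:preballs} (with $b=e^{LT}$) and the Lipschitz bound on $g_\epsilon$,
\[
(\partial B_\epsilon)_{c\epsilon}\subseteq g_\epsilon\bigl((\partial A)_{c'\epsilon}\bigr),\qquad
\lambda\bigl((\partial B_\epsilon)_{c\epsilon}\bigr)\le e^{nLT}\,\lambda\bigl((\partial A)_{c'\epsilon}\bigr),\qquad c':=e^{LT}c .
\]
As $\epsilon\to0$ the bounded sets $(\partial A)_{c'\epsilon}$ decrease to $\partial A$, so $\lambda\bigl((\partial A)_{c'\epsilon}\bigr)\to\lambda(\partial A)=0$, whence $\lambda\bigl((\partial B_\epsilon)_{c\epsilon}\bigr)\to0$; by absolute continuity of the integral of $\rho$ this forces $\theta\bigl((\partial B_\epsilon)_{c\epsilon}\bigr)\to0$. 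Passing to the limit in the last display and using $\max(P_\epsilon)\to\max(P)$ gives $\liminf_{\epsilon\to0}\theta(B_\epsilon)\ge\max(P)$, which together with the trivial upper bound proves \eqref{eq:nearlyopt}.

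The mollifier and Lipschitz-flow bookkeeping is routine; the genuinely delicate step is the last one — the uniform vanishing of the boundary layer $\theta\bigl((\partial B_\epsilon)_{c\epsilon}\bigr)$ — which crucially combines $\lambda(\partial A)=0$, the $\epsilon$-independent bi-Lipschitz bounds on $g_\epsilon$ (so $\partial B_\epsilon$ stays uniformly ``thin''), and $\theta\ll\lambda$; dropping $\lambda(\partial A)=0$ this term need not vanish. A secondary point is the narrow convergence $\mu_k(T)\to\bar\mu(T)$, where both the initial measure and the transport map vary with $k$; this is handled above by decoupling the two via the Prohorov metric together with Lemma~\ref{lem:weakmu}.
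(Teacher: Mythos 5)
Your proof is correct, but it departs from the paper's argument in two substantive ways, both worth noting. First, you establish the pointwise inequality $J(\nu)\le J_\epsilon(\nu)\le\theta\bigl((\Psi_\nu(A))_{c\epsilon}\bigr)$ by direct set inclusions (Lemma~\ref{lem:preballs}, the support of $\eta_\epsilon$, and the bi-Lipschitz bounds on the flow), which gives the exact monotonicity $\max(P_\epsilon)\ge\max(P)$ for every $\epsilon$ and makes transparent why the enlargement radius $r=\max\{1,e^{LT}\}$ is the right one; the paper instead works throughout with the Prohorov metric (Lemmas~\ref{lem:convolution}, \ref{lem:convergence} and \ref{lem:equicont}) and only obtains $\max(P)\le\liminf_\epsilon\max(P_\epsilon)$ asymptotically. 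Your middle step --- decoupling the varying initial datum from the varying control via the Lipschitz stability of pushforward in $d_P$ and Lemma~\ref{lem:weakmu} --- is essentially the paper's Step~1, and your use of $A_\delta$ with the Portmanteau theorem and continuity from above replaces the paper's Lemma~\ref{lem:convergence}$(ii)$ with an equivalent argument. Second, for \eqref{eq:nearlyopt} the paper extracts subsequences and invokes the continuity-set criterion ($\mu(T,\theta,\bar\nu)(\partial A)=0$ because $\mu(T,\theta,\bar\nu)\ll\lambda$ and $\lambda(\partial A)=0$), whereas you give a quantitative boundary-layer estimate: $\theta\bigl((B_\epsilon)_{c\epsilon}\setminus B_\epsilon\bigr)\le\theta\bigl((\partial B_\epsilon)_{c\epsilon}\bigr)\to0$ using the $\epsilon$-uniform bi-Lipschitz bounds on $g_\epsilon$, continuity from above of $\lambda$ on the bounded sets $(\partial A)_{c'\epsilon}$, and the uniform absolute continuity of $\int_E\rho\,d\lambda$ in $\lambda(E)$. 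Your version buys an explicit rate-type control of the gap $\max(P_\epsilon)-\mu(T,\theta,\bar\nu_\epsilon)(A)$ and avoids a second round of subsequence extraction; the paper's version is shorter. Both arguments are valid, and all the auxiliary claims you use (the two mollifier inequalities, $(B)_{s}\setminus B\subseteq(\partial B)_s$ for closed $B$, and the $L$-Lipschitz behaviour of pushforward in the Prohorov metric for $L\ge1$) check out.
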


\begin{remark}
1. Notice that $(P_{\epsilon})$ satisfies all the assumptions of Theorem~\ref{thm:PMP}.
Hence, instead of dealing directly with $(P)$, we may consider the perturbed problem $(P_{\epsilon})$ and try to find its solution with the aid of Theorem~\ref{thm:PMP}.
Suppose that we are managed to obtain such a solution. Let us denote it by $\bar\nu_{\epsilon}$.
Then Theorem~\ref{thm:vs} states that $\bar\nu_{\epsilon}$ approximates (in the narrow topology) a certain solution of the initial problem $(P)$. 
Moreover, if $\theta$ is absolutely continuous and $\lambda(\partial A)=0$, then $\bar\nu_{\epsilon}$ is nearly optimal for $(P)$ in the sense of equation~\eqref{eq:nearlyopt}.

2. In general one must perturb both the initial distribution $\theta$ and the target set $A$ even if
$A$ has already the interior ball property. Indeed, consider the following control system in $\reali^2$:
\[
\begin{cases}
\dot x = u,\\
x(0) = x_0,
\end{cases}
\quad |u|\leq 1.
\]
Assume that the target set $A$ is the unit ball centered at $0$ and $x_0 = (-2,0)$. 
Our aim is to bring the state of the system to $A$ at the time moment $T=1$. Clearly,
this can be done by means of the control $\bar u \equiv (1,0)$. Hence $\bar u$ is a 
solution to the maximization problem:
\begin{gather*}
\mbox{Maximize}\quad \mu(T)(A)\\
\mbox{subject to}\quad
\begin{cases}
\mu_t + \div{}\hspace{-2pt}_x \left(u\mu\right)=0,\\
\mu(0) = \delta_{x_0},
\end{cases}
\quad |u|\leq 1,
\tag{$P'$}
\end{gather*}
and $\max(P') = 1$. Perturbing only the initial distribution, we get the following problem:
\begin{gather*}
\mbox{Maximize}\quad \int_A\rho(T,x)\d x\\
\mbox{subject to}\quad
\begin{cases}
\rho_t + \div{}\hspace{-2pt}_x \left(u\rho\right)=0,\\
\rho(0,x) = \eta_{\epsilon}(x-x_0),
\end{cases}
\quad |u|\leq 1.
\tag{$P_{\epsilon}'$}
\end{gather*}
It is easy to see that $\max(P_{\epsilon}')$ tends to $\frac{1}{2}$, which is strictly less then $\max(P')$.
\end{remark}

\subsection{Preliminary lemmas}

\begin{lemma}
\label{lem:convolution}
Let $\theta\in \mathcal{P}(\reali^n)$ and $\theta_{\epsilon} = (\theta*\eta_{\epsilon})\lambda$.
Then, $d_P(\theta_{\epsilon},\theta)\leq \epsilon$.
\end{lemma}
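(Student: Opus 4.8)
The plan is to estimate the Prohorov distance directly from its definition. Recall that $d_P(\theta_\epsilon,\theta)$ is the infimum of those $\delta>0$ for which $\theta_\epsilon(A)\le\theta(A_\delta^\circ)+\delta$ and $\theta(A)\le\theta_\epsilon(A_\delta^\circ)+\delta$ hold for all Borel sets $A$. Since $\eta$ is the standard mollifier, its support is contained in the closed unit ball, so $\eta_\epsilon$ is supported in the closed ball of radius $\epsilon$. The key observation is that for a Borel set $A$ one has the ``thickening'' inequalities $\theta_\epsilon(A)\le\theta(A_\epsilon)$ and $\theta(A)\le\theta_\epsilon(A_\epsilon)$; intuitively, convolving by $\eta_\epsilon$ moves mass by at most $\epsilon$.

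First I would make the thickening inequality precise. For any Borel set $A$,
\[
\theta_\epsilon(A)=\int_A\theta*\eta_\epsilon(x)\d x=\int_A\int_{\reali^n}\eta_\epsilon(x-y)\d\theta(y)\d x,
\]
and by the Fubini theorem this equals $\int_{\reali^n}\Big(\int_A\eta_\epsilon(x-y)\d x\Big)\d\theta(y)=\int_{\reali^n}(\eta_\epsilon*\caratt{A})(y)\d\theta(y)$, after using that $\eta_\epsilon$ is even (or simply by the change of variables $x\mapsto y-x$). Now the inner integral $\int_A\eta_\epsilon(x-y)\d x$ is supported, as a function of $y$, in $A_\epsilon$, because if $y\notin A_\epsilon$ then $|x-y|>\epsilon$ for every $x\in A$, forcing $\eta_\epsilon(x-y)=0$; moreover $0\le\int_A\eta_\epsilon(x-y)\d x\le\int_{\reali^n}\eta_\epsilon=1$. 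Hence $\theta_\epsilon(A)\le\theta(A_\epsilon)$, and since $A_\epsilon\subseteq A_\delta^\circ$ for any $\delta>\epsilon$, we get $\theta_\epsilon(A)\le\theta(A_\delta^\circ)\le\theta(A_\delta^\circ)+\delta$.

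For the reverse inequality I would argue symmetrically: write $\theta(A)=\int_{\reali^n}\caratt{A}(y)\d\theta(y)$ and compare with $\theta_\epsilon(A_\epsilon)=\int_{\reali^n}(\eta_\epsilon*\caratt{A_\epsilon})(y)\d\theta(y)$; for $y\in A$ the function $x\mapsto\caratt{A_\epsilon}(x)$ equals $1$ on the whole ball of radius $\epsilon$ around $y$, so $(\eta_\epsilon*\caratt{A_\epsilon})(y)=\int_{\reali^n}\eta_\epsilon(x-y)\caratt{A_\epsilon}(x)\d x=\int_{\reali^n}\eta_\epsilon=1$, whence $\theta(A)\le\theta_\epsilon(A_\epsilon)\le\theta_\epsilon(A_\delta^\circ)+\delta$ for any $\delta>\epsilon$. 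Combining the two bounds shows that every $\delta>\epsilon$ is admissible in the definition of $d_P$, so $d_P(\theta_\epsilon,\theta)\le\epsilon$.

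I do not anticipate a serious obstacle here; the only point requiring mild care is the bookkeeping with open versus closed neighbourhoods (the Prohorov distance uses $A_\delta^\circ$ while the mollifier naturally produces $A_\epsilon$), which is handled by taking $\delta$ strictly larger than $\epsilon$ and then passing to the infimum. One should also note explicitly that the support of the standard mollifier lies in the closed unit ball, since this is what converts ``$\eta_\epsilon(z)\neq0$'' into ``$|z|\le\epsilon$''.
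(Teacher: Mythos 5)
Your proof is correct, and its core computation --- Fubini plus the fact that $\eta_\epsilon$ is supported in the ball of radius $\epsilon$ --- is exactly the one the paper uses to obtain $\theta_\epsilon(A)\le\theta(A_{\epsilon}^\circ)$. The only difference is that the paper then invokes the standard fact (cited from Billingsley) that it suffices to verify one of the two Prohorov inequalities, whereas you check the reverse inequality $\theta(A)\le\theta_\epsilon(A_\epsilon)$ directly by a short symmetric argument; both routes work, and yours is merely a bit more self-contained.
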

\begin{proof}
By~\cite[page 72]{BillingsleyConv} it is enough to show that 
\[
\theta_{\epsilon}(A)\leq \theta(A^\circ_{\epsilon})+\epsilon
\]
for any Borel set $A$.
But this follows from the computations:
\begin{align*}
\int_A\int_{\reali^n}\rho_{\epsilon}(x-y)\d{\theta(y)}\d x 
&= 
\int_A\int_{A^\circ_{\epsilon}}\rho_{\epsilon}(x-y)\d{\theta(y)}\d x
\\
&=
\int_{A^\circ_{\epsilon}}\int_A\rho_{\epsilon}(x-y)\d x\d{\theta(y)}
\\
&\leq
\int_{A^\circ_{\epsilon}}\int_{\reali^n}\rho_{\epsilon}(x-y)\d x\d{\theta(y)}
\\
&=
\theta(A^\circ_{\epsilon}).
\end{align*}
\end{proof}

\begin{lemma}
\label{lem:convergence}
Let $\theta\in \mathcal{P}(\reali^n)$, $(\theta_{\epsilon})\subset \mathcal{P}(\reali^n)$, $\epsilon>0$, $r>0$, and $A$ be a closed set.
\begin{enumerate}[$(i)$]
\item If $d_P(\theta_{\epsilon},\theta)\leq r \epsilon$ for all $\epsilon$, then
$\lim_{\epsilon\to 0}\theta_{\epsilon}(A_{r \epsilon}) = \theta(A)$.
\item If $\theta_{\epsilon}\to\theta$ narrowly as $\epsilon\to 0$, then
$\liminf_{\epsilon\to 0}\theta_{\epsilon}(A_{r \epsilon})\leq\theta(A)$.
\end{enumerate}
\end{lemma}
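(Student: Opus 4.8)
\textbf{Proof proposal for Lemma~\ref{lem:convergence}.}

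The plan is to prove the two items separately, using in each case the characterisation of the Prohorov distance recalled in Section~\ref{sec:prlim} together with the portmanteau theorem for narrow convergence (closed sets give upper semicontinuity, open sets give lower semicontinuity).

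For item $(i)$, I would obtain matching upper and lower bounds. For the upper bound, the definition of $d_P$ gives, for every $\epsilon$, the inequality $\theta_{\epsilon}(A_{r\epsilon}) \leq \theta\bigl((A_{r\epsilon})^{\circ}_{r\epsilon}\bigr) + r\epsilon \leq \theta(A^{\circ}_{2r\epsilon}) + r\epsilon$; since $A$ is closed, the nested open neighbourhoods $A^{\circ}_{2r\epsilon}$ decrease to $A$ as $\epsilon\to 0$, so $\limsup_{\epsilon\to 0}\theta_{\epsilon}(A_{r\epsilon}) \leq \theta(A)$ by continuity of the measure $\theta$ along a decreasing sequence of sets. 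For the lower bound, note first that $d_P(\theta_{\epsilon},\theta)\leq r\epsilon\to 0$ forces $\theta_{\epsilon}\to\theta$ narrowly; then for any fixed $\delta>0$ one has $A^{\circ}_{\delta}\subseteq A_{r\epsilon}$ once $r\epsilon<\delta$, hence $\liminf_{\epsilon\to 0}\theta_{\epsilon}(A_{r\epsilon}) \geq \liminf_{\epsilon\to 0}\theta_{\epsilon}(A^{\circ}_{\delta}) \geq \theta(A^{\circ}_{\delta})$ by the portmanteau theorem applied to the open set $A^{\circ}_{\delta}$; letting $\delta\to 0$ and using $\bigcap_{\delta>0}A^{\circ}_{\delta}=A$ (again because $A$ is closed) gives $\liminf_{\epsilon\to 0}\theta_{\epsilon}(A_{r\epsilon})\geq \theta(A)$. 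Combining the two bounds yields the claimed limit.

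For item $(ii)$, narrow convergence alone is assumed, so only the upper bound argument is available; but that is exactly what is asked. Fix $\delta>0$. For $\epsilon$ small enough that $r\epsilon<\delta$ we have $A_{r\epsilon}\subseteq A_{\delta}\subseteq \overline{A^{\circ}_{2\delta}}$ — more simply, $A_{r\epsilon}$ is contained in the closed set $A_{\delta}$ — so $\theta_{\epsilon}(A_{r\epsilon})\leq \theta_{\epsilon}(A_{\delta})$. Since $A_{\delta}$ is closed and $\theta_{\epsilon}\to\theta$ narrowly, the portmanteau theorem gives $\limsup_{\epsilon\to 0}\theta_{\epsilon}(A_{\delta})\leq \theta(A_{\delta})$, whence $\liminf_{\epsilon\to 0}\theta_{\epsilon}(A_{r\epsilon})\leq \limsup_{\epsilon\to 0}\theta_{\epsilon}(A_{r\epsilon})\leq \theta(A_{\delta})$. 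Finally let $\delta\to 0$: since $A$ is closed, $A_{\delta}\downarrow A$, so $\theta(A_{\delta})\to\theta(A)$ by continuity from above, and we conclude $\liminf_{\epsilon\to 0}\theta_{\epsilon}(A_{r\epsilon})\leq \theta(A)$.

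The only mild subtlety — the place where one must be slightly careful rather than merely routine — is the bookkeeping of the two nested neighbourhood radii in item $(i)$: one applies the definition of $d_P$ with the set $A_{r\epsilon}$ (not $A$), which produces an $r\epsilon$-enlargement of an already $r\epsilon$-enlarged set, and one must check that the resulting radius $2r\epsilon$ still tends to $0$ so that $A^{\circ}_{2r\epsilon}$ shrinks down to $A$. Everything else reduces to the standard portmanteau equivalences and monotone continuity of a finite measure, both of which are available from the references cited in Section~\ref{sec:prlim}.
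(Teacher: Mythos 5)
Your treatment of item $(ii)$ is correct (and in fact yields the stronger conclusion $\limsup_{\epsilon\to 0}\theta_\epsilon(A_{r\epsilon})\leq\theta(A)$), and your upper bound in item $(i)$ is exactly the paper's: apply the Prohorov inequality to the set $A_{r\epsilon}$, land in $A_{2r\epsilon}^\circ$, and use that these neighbourhoods shrink to the closed set $A$. The problem is your lower bound in item $(i)$. The inclusion you invoke, $A^{\circ}_{\delta}\subseteq A_{r\epsilon}$ for $r\epsilon<\delta$, is backwards: the $\delta$-neighbourhood \emph{contains} the $r\epsilon$-neighbourhood when $r\epsilon<\delta$, so the monotonicity step $\theta_{\epsilon}(A_{r\epsilon})\geq\theta_{\epsilon}(A^{\circ}_{\delta})$ does not hold. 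This is not a repairable slip within your strategy: reducing to narrow convergence and applying the portmanteau theorem to a fixed open set cannot produce the lower bound, because narrow convergence gives no control over the mass that $\theta_\epsilon$ places in a neighbourhood whose radius $r\epsilon$ shrinks with $\epsilon$. Concretely, take $A=\{0\}$, $\theta=\delta_0$, $\theta_\epsilon=\delta_{x_\epsilon}$ with $|x_\epsilon|=2r\epsilon$: then $\theta_\epsilon\to\theta$ narrowly but $\theta_\epsilon(A_{r\epsilon})=0$ for every $\epsilon$, while $\theta(A)=1$. So the statement "narrow convergence implies $\liminf\theta_\epsilon(A_{r\epsilon})\geq\theta(A)$" is false, and the quantitative hypothesis $d_P(\theta_\epsilon,\theta)\leq r\epsilon$ must be used directly.

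The correct (and shorter) argument, which is the one the paper uses, is to apply the Prohorov inequality in the other direction with the set $A$ itself: $d_P(\theta_\epsilon,\theta)\leq r\epsilon$ gives
\[
\theta(A)\leq \theta_{\epsilon}\left(A^{\circ}_{r\epsilon+\delta}\right)+r\epsilon+\delta
\leq \theta_{\epsilon}\left(A_{r\epsilon+\delta}\right)+r\epsilon+\delta
\]
for every $\delta>0$; letting $\delta\to 0$ yields $\theta(A)\leq\theta_{\epsilon}(A_{r\epsilon})+r\epsilon$, and hence $\theta(A)\leq\liminf_{\epsilon\to 0}\theta_{\epsilon}(A_{r\epsilon})$. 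The point you flag as the "only mild subtlety" (the doubling of the radius in the upper bound) is indeed handled correctly; the real subtlety is that the two bounds in $(i)$ use the two halves of the Prohorov inequality applied to \emph{different} sets ($A_{r\epsilon}$ for the upper bound, $A$ for the lower bound), and only the upper half survives the passage to mere narrow convergence.
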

\begin{proof}
The inequality $d_P(\theta_{\epsilon},\theta)\leq r\epsilon$ implies that
\[
\theta(A)\leq \theta_{\epsilon}(A_{r\epsilon}) + r\epsilon
\qquad\mbox{and}\qquad
\theta_{\epsilon}(A_{r\epsilon})\leq \theta(A_{2 r\epsilon}) + r\epsilon.
\]
Therefore,
\[
\limsup_{\epsilon\to 0}\theta_{\epsilon}(A_{r\epsilon})\leq \theta(A)\leq\liminf_{\epsilon\to 0}\theta_{\epsilon}(A_{r\epsilon}),
\]
and $(i)$ is established.

Since $\theta_{\epsilon}\to \theta$ narrowly as $\epsilon\to 0$, there exists a subsequence $(\theta_{\epsilon_j})$
with
\[
\epsilon_{j} < \frac{1}{j} 
\qquad\mbox{and}\qquad
d_P(\theta_{\epsilon_j},\theta)<\frac{1}{j}
\quad\mbox{for all }\;j.
\]
As a consequence, we have
\[
\theta_{\epsilon_j}(A_{r\epsilon_j})
\leq
\theta\left(A_{r \epsilon_j+1/j}\right) + \frac{1}{j}
\leq
\theta\left(A_{(r+1)/j}\right) + \frac{1}{j}.
\]
Passing to the limit as $j\to \infty$ yields
\[
\limsup_{j\to\infty}\theta_{\epsilon_j}(A_{r\epsilon_j})\leq \theta(A),
\]
which proves $(ii)$.
\end{proof}



\begin{lemma}
\label{lem:equicont}
Let $\theta_1$, $\theta_2$ be probability measures on $\reali^n$ and $\nu$
be a generalized control.
Then,
\[
d_P(\theta_1,\theta_2)\leq \epsilon
\quad\Rightarrow\quad
d_P\left(\mu(T,\theta_1,\nu),\mu(T,\theta_2,\nu)\right)\leq r \epsilon,
\]
where $t\mapsto\mu(t,\theta,\nu)$ is the trajectory corresponding to the 
initial distribution $\theta$ and the generalized control $\nu$
and $r=\max\{1,e^{LT}\}$.
\end{lemma}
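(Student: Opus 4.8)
The plan is to reduce Lemma~\ref{lem:equicont} to a single general fact: if a probability measure on $\reali^n$ is pushed forward by a diffeomorphism whose bi-Lipschitz constant is at most $\Lambda$, then the Prohorov distance grows by no more than the factor $\max\{1,\Lambda\}$. Once this is available the lemma is immediate, because the map $\theta\mapsto\mu(T,\theta,\nu)$ is pushforward by one fixed diffeomorphism that depends on $\nu$ alone.

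First I would make the representation explicit. By~\eqref{eq:representation} applied to~\eqref{eq:cauchybar}, $\mu(T,\theta_i,\nu)=\theta_i\circ V_T^0$, where $V$ is the flow of the averaged field $\bar v(t,x)=\int_U v(t,x,\omega)\,\d{\nu_t(\omega)}$ from~\eqref{eq:average_vf}. Since $\nu_t$ is a probability measure and each $v(t,\cdot,\omega)$ is $L$-Lipschitz by~\textbf{(A1)}, the field $\bar v$ again satisfies~\textbf{(A0)} with the same constant $L$; hence $g:=V_0^T$ is a diffeomorphism of $\reali^n$ with $\Lip(g)\le e^{LT}$, and $g$ depends on $\nu$ only. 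Thus $\mu(T,\theta_1,\nu)$ and $\mu(T,\theta_2,\nu)$ are the pushforwards of $\theta_1$ and $\theta_2$ under one and the same map $g$, so that $\mu(T,\theta_i,\nu)(A)=\theta_i\bigl(g^{-1}(A)\bigr)$ for every Borel set $A$, where $g^{-1}=V_T^0$.

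Next I would establish the general fact. Set $\Lambda:=e^{LT}$ and $r=\max\{1,\Lambda\}$, and fix a Borel set $A$. From $d_P(\theta_1,\theta_2)\le\epsilon$ we get $\theta_1\bigl(g^{-1}(A)\bigr)\le\theta_2\bigl((g^{-1}(A))^\circ_\epsilon\bigr)+\epsilon$. Applying Lemma~\ref{lem:preballs} to the diffeomorphism $g^{-1}=V_T^0$, whose inverse $g$ satisfies $\Lip(g)\le\Lambda$, gives $(g^{-1}(A))^\circ_\epsilon\subseteq g^{-1}\bigl(A^\circ_{\Lambda\epsilon}\bigr)$ (this inclusion is in any case elementary: if $|x-y|<\epsilon$ with $g(y)\in A$, then $|g(x)-g(y)|<\Lambda\epsilon$, so $g(x)\in A^\circ_{\Lambda\epsilon}$). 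Hence $\theta_2\bigl((g^{-1}(A))^\circ_\epsilon\bigr)\le\mu(T,\theta_2,\nu)\bigl(A^\circ_{\Lambda\epsilon}\bigr)$, and using $\Lambda\epsilon\le r\epsilon$, $\epsilon\le r\epsilon$, together with monotonicity of $\rho\mapsto A^\circ_\rho$, we obtain $\mu(T,\theta_1,\nu)(A)\le\mu(T,\theta_2,\nu)\bigl(A^\circ_{r\epsilon}\bigr)+r\epsilon$. Interchanging the roles of $\theta_1$ and $\theta_2$ yields the symmetric inequality, so $d_P\bigl(\mu(T,\theta_1,\nu),\mu(T,\theta_2,\nu)\bigr)\le r\epsilon$, which is the assertion.

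The argument is essentially routine; the two points that need care are (i) checking that the averaged field $\bar v$ inherits the Lipschitz constant $L$, so that its flow distorts distances by at most $e^{LT}$ on $[0,T]$, and (ii) invoking Lemma~\ref{lem:preballs} on the inverse flow $V_T^0$ rather than on $V_0^T$, keeping straight which enlarged set appears on which side of the Prohorov inequality. Finally, the factor in the conclusion is $\max\{1,e^{LT}\}$ rather than $e^{LT}$ precisely because the additive slack in the Prohorov inequality remains $\epsilon$ regardless of $\Lambda$, which forces $r\ge1$ in addition to $r\ge e^{LT}$.
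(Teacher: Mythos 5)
Your argument is correct and is essentially the paper's own proof: both represent $\mu(T,\theta_i,\nu)$ as the pushforward $\theta_i\circ V_T^0$ of the flow of the averaged field, note that the inverse map $V_0^T$ is $e^{LT}$-Lipschitz, and apply Lemma~\ref{lem:preballs} to transfer the Prohorov inequality through the flow. The only (cosmetic) difference is that the paper inserts an auxiliary $\delta>0$ to handle the infimum in the definition of $d_P$ before letting $\delta\to0$, a technicality your write-up glosses over but which is repaired in exactly the same way.
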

\begin{proof}
Let $\phi = V^0_T$, where $V$ is the flow of the vector field
\[
(t,x)\mapsto \int_{\reali^m} v(t,x,\omega)\d\nu_t(\omega).
\]
Clearly, $\phi$ is a diffeomorphism, $\Lip(\phi^{-1})\leq r$, and
\[
\mu(T,\theta_1,\nu) = \theta_1\circ \phi,
\qquad
\mu(T,\theta_2,\nu) = \theta_2\circ \phi.
\]
The inequality $d_P(\theta_1,\theta_2)\leq \epsilon$ means that
\[
\begin{cases}
\theta_1\left(\phi(E)\right)\leq \theta_2\left(\left(\phi(E)\right)_{\epsilon+\delta}^\circ\right) + (\epsilon+\delta),\\
\theta_2\left(\phi(E)\right)\leq \theta_1\left(\left(\phi(E)\right)_{\epsilon+\delta}^\circ\right) + (\epsilon+\delta).
\end{cases}
\]
for any $\delta>0$ and any Borel set $E$.
It follows from Lemma~\ref{lem:preballs} that
\[
\left(\phi(E)\right)_{\epsilon+\delta}^\circ\subseteq \phi\left(E_{r(\epsilon+\delta)}^\circ\right).
\]
Therefore, for any $\delta>0$ and any Borel set $E$, we have
\[
\begin{cases}
\theta_1\circ \phi(E)\leq \theta_2\circ \phi\left(E_{r(\epsilon+\delta)}^\circ\right) + r(\epsilon+\delta),\\
\theta_2\circ \phi(E)\leq \theta_1\circ \phi\left(E_{r(\epsilon+\delta)}^\circ\right) + r(\epsilon+\delta).
\end{cases}
\]
That is, $d_P\left(\mu(T,\theta_1,\nu),\mu(T,\theta_2,\nu)\right)\leq r \epsilon$ as desired.
\end{proof}

\subsection{Proof of Theorem~\ref{thm:vs}}
\indent\textbf{1.} Let us show that
\begin{equation}
\label{eq:vs_first}
\nu_{\epsilon}\to\nu\;\;\mbox{narrowly}
\qquad\Rightarrow\qquad
\mu(T,\theta_{\epsilon},\nu_{\epsilon})\to \mu(T,\theta,\nu)\;\;\mbox{narrowly}.
\end{equation}
It follows from Lemma~\ref{lem:convolution} that $d_P(\theta_{\epsilon},\theta)\leq\epsilon$. Now
Lemma~\ref{lem:equicont} yields
\begin{equation}
\label{eq:reps}
d_P\left(\mu(T,\theta_{\epsilon},\nu),\mu(T,\theta,\nu)\right)\leq r \epsilon.
\end{equation}
Hence we can write
\begin{multline*}
d_{P}\left(\mu(T,\theta_{\epsilon},\nu_{\epsilon}),\mu(T,\theta,\nu)\right)
\leq
d_{P}\left(\mu(T,\theta_{\epsilon},\nu_{\epsilon}),\mu(T,\theta,\nu_{\epsilon})\right)\\
+
d_{P}\left(\mu(T,\theta,\nu_{\epsilon}),\mu(T,\theta,\nu)\right)
\leq 
r \epsilon +
d_{P}\left(\mu(T,\theta,\nu_{\epsilon}),\mu(T,\theta,\nu)\right).
\end{multline*}
Passing to the limit as $\epsilon\to 0$ and taking into account Lemma~\ref{lem:weakmu}, 
we obtain~\eqref{eq:vs_first}.

\textbf{2.} Since $\bar\nu_{\epsilon}$ is optimal for ($P_{\epsilon}$), we have
\begin{equation}
\label{eq:ws_onehalf}
\mu(T,\theta_{\epsilon},\nu)\left(A_{r \epsilon}\right)\leq \mu(T,\theta_{\epsilon},\bar\nu_{\epsilon})\left(A_{r \epsilon}\right)
\quad\mbox{for all }\;\nu.
\end{equation}
On the other hand, Lemma~\ref{lem:convergence}$(i)$, together with~\eqref{eq:reps}, implies that
\[
\lim_{\epsilon\to 0}\mu(T,\theta_{\epsilon},\nu)\left(A_{r \epsilon}\right)=\mu(T,\theta,\nu)(A) \qquad\mbox{for all  }\nu.
\]
Hence passing to the limit in~\eqref{eq:ws_onehalf} yields
\[
\mu(T,\theta,\nu)(A)\leq \liminf_{\epsilon\to 0}\mu(T,\theta_{\epsilon},\bar\nu_{\epsilon})\left(A_{r \epsilon}\right)\qquad\mbox{for all  }\nu,
\]
so that
\begin{equation}
\label{eq:vs_second}
\max(P)\leq \liminf_{\epsilon\to 0}\mu(T,\theta_{\epsilon},\bar\nu_{\epsilon})\left(A_{r \epsilon}\right)
=\liminf_{\epsilon\to 0}\left(\max(P_{\epsilon})\right).
\end{equation}

\textbf{3.} Assume that a subsequence $(\bar\nu_{\epsilon_j})$ converges narrowly to some $\bar \nu$.
Then, by~\eqref{eq:vs_first}, we have
\[
\mu(T,\theta_{\epsilon_j},\bar \nu_{\epsilon_j})\to \mu(T,\theta,\bar\nu)\quad\mbox{narrowly}.
\]
Now Lemma~\ref{lem:convergence}$(ii)$ implies that
\[
\liminf_{\epsilon\to 0}\mu(T,\theta_{\epsilon},\bar \nu_{\epsilon})
\left(A_{r\epsilon}\right)
\leq \mu(T,\theta,\bar\nu)(A). 
\]
Comparing this inequality with~\eqref{eq:vs_second}, we conclude that $\bar\nu$ is optimal for ($P$).

\textbf{4.} To complete the first part of the proof, it remains to show that
\[
\limsup_{\epsilon\to 0}\left(\max(P_{\epsilon})\right)\leq \max(P),
\]
or, equivalently,
\begin{equation}
\label{eq:vs_third}
\limsup_{\epsilon\to 0}\mu(T,\theta_{\epsilon},\bar\nu_{\epsilon})\left(A_{r \epsilon}\right)\leq \mu(T,\theta,\bar\nu)(A),
\end{equation}
where $\bar\nu$ is an accumulation point of $(\bar\nu_{\epsilon})$.
To this end, take any converging sequence $\left(\mu(T,\theta_{\epsilon_j},\bar\nu_{\epsilon_j})(A_{r \epsilon_j})\right)_j$ (we may always find one, since every $\mu(T,\theta_{\epsilon},\bar\nu_{\epsilon})(A_{r \epsilon})$ belongs to the compact segment $[0,1]$). 
Extract from it a subsequence $\left(\mu(T,\theta_{\epsilon_{j(k)}},\bar\nu_{\epsilon_{j(k)}})(A_{r \epsilon_{j(k)}})\right)_k$
such that $\bar\nu_{\epsilon_{j(k)}}$ converges narrowly to some $\bar\nu$. Then, by~\eqref{eq:vs_first}, we obtain
\[
\mu(T,\theta_{\epsilon_{j(k)}},\bar\nu_{\epsilon_{j(k)}}) \to\mu(T,\theta,\bar\nu).
\]
Now it follows from Lemma~\ref{lem:convergence}$(ii)$ that
\begin{multline}
\label{eq:vs_last}
\lim_{j\to\infty}\mu(T,\theta_{\epsilon_j},\bar\nu_{\epsilon_j})\left(A_{r \epsilon_{j}}\right)\\
=
\lim_{k\to\infty}\mu(T,\theta_{\epsilon_{j(k)}},\bar\nu_{\epsilon_{j(k)}})\left(A_{r \epsilon_{j(k)}}\right)
\leq\mu(T,\theta,\bar\nu)(A).
\end{multline}
Thus, every converging subsequence of 
$\left(\mu(T,\theta_{\epsilon},\bar\nu_{\epsilon})\left(A_{r \epsilon}\right)\right)$ satisfies~\eqref{eq:vs_last}.
This proves~\eqref{eq:vs_third}.

\textbf{5.} Let us prove the second part of the theorem.
Again, take any converging sequence $\left(\mu(T,\theta,\bar\nu_{\epsilon_j})(A)\right)_j$.
Extract a subsequence $\left(\mu(T,\theta,\bar\nu_{\epsilon_{j(k)}})(A)\right)_{k}$
such that $\bar \nu_{\epsilon_{j(k)}}$ converges narrowly to some $\bar\nu$ as $k\to \infty$.
In this case, by Lemma~\ref{lem:weakmu}, we have
\begin{equation}
\label{eq:2d_part1}
\mu(T,\theta,\bar\nu_{\epsilon_{j(k)}})\to \mu(T,\theta,\bar\nu).
\end{equation}
Since $\theta$ is absolutely continuous, we conclude that $\mu(T,\theta,\bar\nu)$ is absolutely continuous as well. Now the identity $\lambda(\partial A)=0$ imlies that $A$ is a \emph{continuty set} of $\mu(T,\theta,\bar\nu)$. Thus, it follows from~\eqref{eq:2d_part1} that
\begin{equation*}
\label{eq:2d_part2}
\lim_{j\to\infty}\mu(T,\theta,\bar\nu_{\epsilon_{j}})(A) = \lim_{k\to\infty}\mu(T,\theta,\bar\nu_{\epsilon_{j(k)}})(A) =  \mu(T,\theta,\bar\nu)(A) = \max(P).
\end{equation*}
In other words, every converging subsequence of 
$\left(\mu(T,\theta,\bar\nu_{\epsilon})\right)_{\epsilon}$ has $\max(P)$ as its limit.
This proves~\eqref{eq:nearlyopt} and completes the proof of Theorem~\ref{thm:vs}.

\appendix

\section{Young measures}

Below, $\lambda$ is the Lebesgue measure on $\reali^N$, $\Omega\subset \reali^N$ is a Borel set with $\lambda(\Omega)<\infty$, and
$\mathcal{B}(\Omega)$ is the $\sigma$-algebra of all Borel measurable subsets of $\Omega$.

\begin{definition}
A \emph{Young measure} is a positive Borel measure $\nu$ on $\Omega\times\reali^d$
such that $\nu(A\times\reali^d)=\lambda(A)$ for any Borel set $A\subseteq \Omega$.
The space of all Young measures is denoted by $\mathcal{Y}(\Omega;\reali^d)$.
\end{definition}

\begin{definition}
The \emph{narrow topology} on $\mathcal{Y}(\Omega;\reali^d)$ is the weakest topology
for which the maps 
$\nu\mapsto \int_{\Omega\times\reali^d}\phi\d\nu$
are continuous, where $\phi$ runs through the set of all bounded Charat\'eodory integrands
\footnote{A Charath\'eodory integrand is a $\mathcal{B}(\Omega)\otimes \mathcal{B}(\reali^d)$-measurable map 
which is continuous w.r.t. the second variable.} 
$\phi$ on $\Omega\times\reali^d$. 
\end{definition}

We always assume that $\mathcal{Y}(\Omega;\reali^d)$ is equipped with the narrow topology.

\begin{remark}
1. In the above definition one may replace Charath\'eodory 
integrands by continuous ones. This fact follows easily from the Scorza-Dragoni theorem. 
As a consequence, the narrow convergence of Young measures enjoys all the properties of the narrow convergence
of probability measures.


2. In the definition one may also take test functions of the form
\[
\phi(x,\xi) = 1_A(x)f(\xi),
\]
where $A$ is a Borel subset of $\Omega$ and $f$ is a bounded continuous function on $\reali^d$. 
Bearing that in mind, one may easily show that the narrow limit of a sequence of Young measures is a Young measure. Indeed,
taking $\phi = 1_A\cdot 1_{\reali^d}$, we get
\[
\lambda(A)=\lim_{j\to\infty} \nu_j(A\times\reali^d) = \nu(A\times\reali^d).
\]
\end{remark}

\begin{definition}
For a measurable function $u\colon\Omega\to\reali^d$ the \emph{associated Young measure} is the 
pushforward of $\lambda$ by the map $x\mapsto \left(x,u(x)\right)$.
\end{definition}

The set of Young measures associated with functions is dense in the set of all Young measures $\mathcal{Y}(\Omega;\reali^d)$.

Every Young measure $\nu$ can be described by its \emph{disintegration}, which is a family $(\nu_x)_{x\in\Omega}$
of probability measures on $\reali^d$ characterized by
\[
\nu(E) = \int_{\Omega} \nu_x(E_x)\d x,\qquad E\in \mathcal{B}(\Omega\times\reali^d),
\]
where $E_x=\{\xi\;\colon\;(x,\xi)\in E\}$. One may show that for any $\nu$-integrable $\psi$
\[
\int_{\Omega\times\reali^d}\psi\d\nu = \int_{\Omega}\Big[\int_{\reali^d}\psi(x,\xi)\d\nu_x(\xi)\Big]\d x.
\]
Consider, for instance, the Young measure $\nu$ associated with a measurable function $u\colon \Omega\to\reali^d$. Its disintegration is $\nu_x=\delta_{u(x)}$.

\begin{remark}
The notion of disintegration explains why we think of Young measures as generalized controls. Indeed, a map $u\colon\Omega\to\reali^d$ 
is a usual control: at every $x$ the value of the control parameter is prescribed and equal to $u(x)$. 
A Young measure $\nu$ is a generalized control: at every $x$ the control parameter is taken randomly according to 
the probability distribution $\nu_x$. Therefore Young measures are analogous to mixed strategies in Game Theory, 
where players choose their strategies randomly according to a probability distribution.
\end{remark}

\begin{proposition}
\label{prop:YMlimit}
Let $(\nu_j)$ be a sequence of Young measures. 
Let $\psi\colon\Omega\times\reali^d\to\reali^m$ be a bounded continuous map. 
If $(\nu_j)$ converges narrowly to a Young measure $\nu$, then the sequence of maps
$x\mapsto \int_{\reali^d}\psi\left(x,\xi\right)\d{(\nu_j)_x(\xi)}$ converges to the map $x\mapsto \int_{\reali^d}\psi(x,\xi)\d{\nu_x(\xi)}$
weakly in $\L1(\Omega;\reali^m)$.
\end{proposition}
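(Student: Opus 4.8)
The plan is to verify the defining property of weak $\L1$ convergence directly, using only the disintegration formula and the definition of the narrow topology. Since a map into $\reali^m$ converges weakly in $\L1(\Omega;\reali^m)$ if and only if each of its $m$ coordinates converges weakly in $\L1(\Omega)$ (test against $g\,e_i$ with $g\in \L\infty(\Omega)$ and $e_i$ a standard basis vector), it suffices to treat the scalar case $m=1$. So let $\psi\colon\Omega\times\reali^d\to\reali$ be bounded and continuous, put $\psi_j(x) = \int_{\reali^d}\psi(x,\xi)\d{(\nu_j)_x(\xi)}$ and $\psi_0(x)=\int_{\reali^d}\psi(x,\xi)\d{\nu_x(\xi)}$. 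Each $\psi_j$ is Borel measurable (measurability of the disintegration) and bounded by $\sup|\psi|$; since $\lambda(\Omega)<\infty$, all $\psi_j$ lie in $\L1(\Omega)$, in fact in $\L\infty(\Omega)$. Thus $\psi_j\wto\psi_0$ in $\L1(\Omega)$ means precisely
\[
\int_{\Omega} g(x)\psi_j(x)\d x \to \int_{\Omega}g(x)\psi_0(x)\d x
\qquad\text{for every }g\in \L\infty(\Omega).
\]

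To prove this, I would fix such a $g$ and replace it by a bounded Borel representative; this alters none of the integrals above, and none of the integrals against $\nu_j$ below either, since $\nu_j$ projects to $\lambda$ on $\Omega$, so $\lambda$-null sets are $\nu_j$-null. Consider $\Phi(x,\xi) = g(x)\psi(x,\xi)$. It is $\mathcal{B}(\Omega)\otimes\mathcal{B}(\reali^d)$-measurable, continuous in $\xi$ for each fixed $x$, and bounded by $\|g\|_\infty\sup|\psi|$; that is, $\Phi$ is a bounded Carath\'eodory integrand on $\Omega\times\reali^d$. By the disintegration formula,
\[
\int_{\Omega}g(x)\psi_j(x)\d x = \int_{\Omega}\Big[\int_{\reali^d}\Phi(x,\xi)\d{(\nu_j)_x(\xi)}\Big]\d x = \int_{\Omega\times\reali^d}\Phi\d{\nu_j},
\]
and likewise $\int_{\Omega}g(x)\psi_0(x)\d x = \int_{\Omega\times\reali^d}\Phi\d\nu$. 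Since $(\nu_j)$ converges to $\nu$ in the narrow topology and $\Phi$ is a bounded Carath\'eodory integrand, the definition of that topology gives $\int\Phi\d{\nu_j}\to\int\Phi\d\nu$, which is exactly the required convergence.

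The argument is essentially bookkeeping, and I do not expect a genuine obstacle; the only point needing care is the passage from the given continuous test map $\psi$ to the integrand $\Phi(x,\xi)=g(x)\psi(x,\xi)$ one actually feeds into the narrow-convergence definition, i.e. checking that $\Phi$ remains an admissible (bounded, $\mathcal{B}(\Omega)\otimes\mathcal{B}(\reali^d)$-measurable, continuous in $\xi$) test integrand — which forces the choice of a bounded Borel representative of $g\in\L\infty(\Omega)$ and uses that modifying $g$ on a $\lambda$-null set is harmless for integration against each $\nu_j$. An alternative, more abstract route would be to note first that $(\psi_j)$ is bounded in $\L\infty(\Omega;\reali^m)$, hence uniformly integrable and, by the Dunford--Pettis theorem, relatively weakly sequentially compact in $\L1$, and then identify the weak limit of an arbitrary weakly convergent subsequence by the same disintegration computation; but the direct verification above is cleaner and avoids extracting subsequences.
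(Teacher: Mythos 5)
Your proof is correct and is essentially the paper's own argument: both test against a bounded (Borel representative of an) $\L\infty$ function, observe that its product with $\psi$ is a bounded Carath\'eodory integrand, and conclude via the disintegration formula and the definition of narrow convergence. The only cosmetic difference is that you reduce to the scalar case first, whereas the paper works directly with the dot product $\phi(x)\cdot\psi(x,\xi)$ for vector-valued $\phi$.
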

\begin{proof}
Take a bounded measurable function $\phi\colon\Omega\to\reali^m$. The map $(x,\xi)\mapsto\phi(x)\cdot\psi(x,\xi)$
is a bounded Carath\'eodory integrand. Therefore,
\[
\lim_{j\to \infty}
\int_{\Omega\times\reali^d} \phi(x)\cdot\psi(x,\xi)\d\nu_j(x,\xi)= 
\int_{\Omega\times\reali^d} \phi(x)\cdot\psi(x,\xi)\d\nu(x,\xi).
\]
We may equivalently write
\[
\lim_{j\to\infty}\int_{\Omega}\phi(x)\cdot\int_{\reali^d}\psi\left(x,\xi\right)\d{(\nu_j)_x(\xi)}\d x = \int_{\Omega}\phi(x)\cdot\int_{\reali^d}\psi(x,\xi)\d{\nu_x(\xi)}\d x.
\]
Changing $\phi$ on a $\lambda$-negligible set does not affect both sides of the equality. Therefore, it holds for any 
$\phi\in\L\infty(\Omega;\reali^m)$.
\end{proof}

\begin{definition}
A subset $\mathcal{H}$ of $\mathcal{Y}(\Omega;\reali^d)$ is \emph{tight} if for every $\epsilon>0$ 
there exists a compact set $K\subset\reali^d$ such that
$\nu(\Omega\times K^c)< \epsilon$ for all $\nu\in \mathcal{H}$, where $K^c$ is the complement of $K$.
\end{definition}

\begin{theorem}[Prohorov]
\label{thm:prokhorov}
Every tight subset of $\mathcal{Y}(\Omega;\reali^d)$ is relatively narrowly compact.
\end{theorem}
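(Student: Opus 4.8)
The plan is to reduce the statement to the classical Prohorov theorem for finite Borel measures on a Polish space. Every Young measure $\nu\in\mathcal{Y}(\Omega;\reali^d)$ is a positive Borel measure on $\Omega\times\reali^d$ with total mass $\nu(\Omega\times\reali^d)=\lambda(\Omega)$, which is finite and the same for all $\nu$. Viewing each such $\nu$ as a finite Borel measure on the Polish space $X=\reali^N\times\reali^d$ (extended by zero off $\Omega\times\reali^d$), the family $\mathcal{H}$ becomes a uniformly mass-bounded family of measures on $X$, and it remains to verify that it is tight in the classical sense and that its weak limits lie back in $\mathcal{Y}(\Omega;\reali^d)$.

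First I would establish classical tightness on $X$. Fix $\epsilon>0$. By the tightness hypothesis there is a compact $K\subset\reali^d$ with $\nu(\Omega\times K^c)<\epsilon/2$ for all $\nu\in\mathcal{H}$. Since $\lambda(\Omega)<\infty$ and the Lebesgue measure is inner regular, there is a compact $K_{\Omega}\subset\Omega$ with $\lambda(\Omega\setminus K_\Omega)<\epsilon/2$. Using the marginal identity $\nu(B\times\reali^d)=\lambda(B\cap\Omega)$, the product $K_\Omega\times K$ is compact in $X$ and
\[
\nu\bigl((K_\Omega\times K)^c\bigr)
\leq
\nu\bigl((\Omega\setminus K_\Omega)\times\reali^d\bigr)+\nu(\Omega\times K^c)
<\epsilon
\]
for every $\nu\in\mathcal{H}$. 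Thus $\mathcal{H}$ is tight on $X$, and the classical Prohorov theorem yields that $\mathcal{H}$ is relatively compact for the weak topology induced by $\Cb{0}(X)$.

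Next I would identify this weak topology with the narrow topology and check closedness. By the first remark of this appendix (which uses the Scorza--Dragoni theorem), the narrow topology on $\mathcal{Y}(\Omega;\reali^d)$ is already generated by bounded continuous integrands, so convergence tested against $\Cb{0}(X)$ is exactly narrow convergence; hence relative weak compactness is relative narrow compactness. It then remains to show that any limit $\nu$ of a convergent (sub)sequence $(\nu_j)\subset\mathcal{H}$ is again a Young measure. Testing against integrands depending only on $x$, the $\reali^N$-marginal of $\nu$ is the weak limit of the marginals of the $\nu_j$, all equal to $\lambda$ restricted to $\Omega$; since a finite Borel measure on $\reali^N$ is determined by its integrals against bounded continuous functions, the marginal of $\nu$ equals $\lambda$ restricted to $\Omega$. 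In particular $\nu\bigl((\reali^N\setminus\Omega)\times\reali^d\bigr)=0$ and $\nu(A\times\reali^d)=\lambda(A)$ for every Borel $A\subseteq\Omega$, so $\nu\in\mathcal{Y}(\Omega;\reali^d)$.

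The main obstacle is the topological bookkeeping rather than any hard estimate: one must make sure that the narrow topology, defined through Carath\'eodory integrands, really coincides on the tight, mass-bounded family at hand with the weak topology of $\Cb{0}(X)$, so that the classical Prohorov theorem applies verbatim; this is precisely what the Scorza--Dragoni reduction provides. The only other delicate point is that $\Omega$ need not be compact or even closed, which is handled by invoking the inner regularity of $\lambda$ to supply the compact base sets $K_\Omega$.
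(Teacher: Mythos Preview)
The paper does not prove this theorem; it is stated in the appendix as a classical result (with a reference to \cite{ValadierCourse} for background) and used as a black box. So there is no ``paper's own proof'' to compare against.

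Your reduction to the classical Prohorov theorem on the Polish space $X=\reali^N\times\reali^d$ is the standard route and is essentially correct: manufacturing tightness in the $\Omega$-direction from inner regularity of $\lambda$, combining it with the assumed tightness in $\reali^d$, and then checking that weak limits have the right $\Omega$-marginal (the paper itself records this last point in the second remark of the appendix).

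One technical point is glossed over. You assert that ``convergence tested against $\Cb{0}(X)$ is exactly narrow convergence'' by invoking the Scorza--Dragoni remark, but that remark only tells you that Carath\'eodory integrands on $\Omega\times\reali^d$ may be replaced by bounded \emph{continuous} integrands on $\Omega\times\reali^d$. When $\Omega$ is merely Borel (not closed), a function in $\Cb{0}(\Omega\times\reali^d)$ need not extend to $\Cb{0}(X)$, so the two test classes are not literally the same. The gap is easily closed: for each $\epsilon>0$ your compactum $K_\Omega\times K\subset\Omega\times\reali^d$ is closed in $X$, so Tietze lets you extend $\phi|_{K_\Omega\times K}$ to some $\tilde\phi\in\Cb{0}(X)$ with the same sup bound, and uniform tightness gives $\bigl|\int\phi\,d\nu'-\int\tilde\phi\,d\nu'\bigr|\leq 2\|\phi\|_\infty\,\epsilon$ for every $\nu'$ in the sequence and in the limit; a $3\epsilon$ argument then upgrades $\Cb{0}(X)$-weak convergence to narrow convergence. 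In the paper's actual application $\Omega=[0,T]$ is compact, so $\Omega\times\reali^d$ is closed in $X$ and the identification holds on the nose without this extra step.
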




\small{

  \bibliography{target}

  \bibliographystyle{abbrv}

}

\end{document}